\newcommand{\N}{\mathbb{N}}
\newcommand{\I}{\mathbb{I}}
\newcommand{\R}{\mathbb{R}}
\newcommand{\dv}{\operatorname{div}}
\newcommand{\rf}{\rho_f}
\newcommand{\rs}{\rho_s}
\newcommand{\om}{\Omega}
\newcommand{\os}{{\Omega_s(t)}}
\newcommand{\oes}{{\Omega^\eta_{s}(t)}}
\newcommand{\oesb}{{\overline{\Omega^\eta_{s}}(t)}}
\newcommand{\of}{{\Omega_f(t)}}
\newcommand{\dom}{\partial\Omega}
\newcommand{\dos}{{\partial\Omega_s(t)}}
\newcommand{\ou}{{\omega_u(t)}}
\newcommand{\xg}{x_G(t)}
\renewcommand{\div}{\operatorname{div}}
\newcommand{\ds}{\displaystyle}
\newcommand{\ue}{u_\eta}
\newcommand{\Je}{J_\eta}
\newcommand{\Me}{M_\eta}
\newcommand{\ues}{u_{\eta,s}}
\newcommand{\re}{\rho_\eta}
\newcommand{\He}{H_\eta}
\newcommand{\pe}{p_\eta}
\newcommand{\eps}{\varepsilon}
\def\bmu{\begin{multline*}}
\def\emu{\end{multline*}}
\def\be{\begin{equation}}
\def\ee{\end{equation}}
\newtheorem{theorem}{Theorem}[section]
\newtheorem{lemma}[theorem]{Lemma}
\title{Numerical analysis of a penalization method for the three-dimensional motion of a rigid body in an incompressible viscous fluid} 
\author{C. Bost, G.-H. Cottet and E. Maitre\\
Universit\'e de Grenoble and CNRS, Laboratoire Jean Kuntzmann\\
BP 53, 38041 Grenoble Cedex 9, France}
\begin{document}

\maketitle
\begin{abstract}
We present and analyze a penalization method wich extends the the method of \cite{AngBruFab99} to the case of a rigid body moving freely in an incompressible fluid. The fluid-solid system is viewed as a single variable density flow with an interface captured by a level set method. The solid velocity is computed by averaging at avery time the flow velocity in the solid phase. This velocity is used to penalize the flow velocity at the fluid-solid interface and to move the interface. Numerical illustrations are provided to illustrate our convergence result. A discussion of our result in the light of existing existence results is also given.
\end{abstract}
\section{Introduction}
In this paper we are concerned with the numerical analysis of a penalization method for the two-way interaction of a rigid body with an incompressible fluid in three dimensions. The traditional numerical approach to deal with fluid-structure problems is the so-called ALE (for Arbitrary Lagrangian Eulerian) method where fluids (resp solids) are described in an Eulerian (resp Lagrangian) framework. Fluids are computed on a moving mesh fitting the solids and stress and velocity continuity are used to derive the appropriate boundary conditions on the fluid/solid interface. A convergence proof of a finite-element method based on this approach can be found in \cite{tuta}. 

Alternate methods can be devised where the whole fluid-solid system is seen as a multiphase flow and the fluid/solid interface is captured implicitly rather than explicitly. Likewise, the interface continuity conditions are recovered in an implicit fashion. The rigid motion inside the solid phase can be enforced through a Lagrange multiplier \cite{GloPan01}. The method we consider here is of this type but the rigid motion is approximately satisfied in the solid through penalization. 

Penalization methods have  already  been considered in the past for this problem.  In \cite{ConSanTuc00} the authors considered a single solid ball and worked inside the frame moving with its center. Then they penalized the mean velocity of the (virtual) fluid inside this ball. Their method  is restricted to one ball. In\cite{SanMarStaTuc02,maury} the penalization is applied to the deformation tensor inside the body. In \cite{SanMarStaTuc02} this method is used to prove the existence of solutions  for the fluid-solid interaction variational problem in two dimensions. In \cite{maury} it is used together with a two-dimensional finite element method in a variational framework. Here the penalization is applied to the flow velocity itself.  The method thus extends the one devised and analyzed in \cite{AngBruFab99} in the case of a rigid solid with prescribed motion.

In our method the determination of the body velocity is part of the problem. This velocity, instead of the flow velocity, is used to move the solid phase. This has a crucial practical importance, in particular for problems with large displacements and strong shear, since it ensures that the solid remains rigid at the discrete level, although the rigidity constraint in the flow field is only approximately satisfied. A vorticity formulation of the method and its validation on a number of 2D and 3D reference cases are given in \cite{Coquerelle2008}. 
An outline  of the paper is as follows. In section 2 we recall the weak formulation of the problem and we describe the penalization method. Section 3 is devoted to the convergence proof. In section 4 we provide some numerical illustrations.  Section 5 is devoted to some concluding remarks. The proofs of some technical results used in section 3 are given in the appendix. 

\section{Weak formulation and penalized problem}
\label{sec2}
Let $\om$ be an open bounded domain of $\R^3$, filled with a viscous incompressible and homogeneous fluid of density $\rf>0$ and viscosity $\mu>0$. Inside this domain, we consider the motion of an immersed homogeneous rigid solid of density $\rs>0$ during a time interval $[0,T]$, $T>0$, chosen so that  the solid never comes in contact with $\dom$.
For $t\in[0,T]$, we denote  by $\of$ and $\os$ the non-empty fluid and solid open connected domains, with $\overline{\os}\cup\overline{\of}=\overline{\om}$ and $\of\cap\os=\emptyset$. 
The center of mass of the solid is denoted by $\xg$, its mass and inertia tensor by $M$ and $J(t)$. Without loss of generality we assume that $M=1$. Then
$$\xg=\int_\os \rs x\, dx,\quad J(t)=\int_\os \rs(r^2\I-r\otimes r)\, dx$$
where $r(x,t)=x-\xg$. The system is subject to a body density force $g$ (usually gravity). 
\subsection{Weak formulation} The basic formulation of this fluid-solid coupling is the following~: given initial conditions,
\begin{equation}
x'_G(0)=v_g^0,\quad\omega_u(0)=\omega_u^0,\quad u = u^0,\quad\Omega_s(0)=\Omega_s^0\label{ci}
\end{equation}
supplemented with
\begin{equation}
x_G(0)=\int_{\Omega_s^0} x\, dx, \qquad X_s(x,0)=x,
\label{ciaux}
\end{equation}
find $t\to \os$ and $(x,t)\to(u(x,t), p(x,t))$ solution for $t>0$ of
\begin{align}
\rf(u_t+(u\cdot\nabla) u)-2\mu\dv (D(u))+\nabla p=\rf g&\qquad\text{on }\of,
\label{NS1}\\
\div u =0&\qquad\text{on }\of,
\label{NS2}\\
u = 0&\qquad\text{on }\dom,
\label{BCNS1}\\
u = x_G'+\omega_u\times r&\qquad\text{on }\dos,
\label{BCNS2}\\
x_G''(t)=g+\ds\int_{\dos}(\Sigma \, n) \, ds,
&\label{ConsMom1}\\
J(t)\omega_u'(t)=-\ou\times(J(t)\,\ou)+\ds\int_{\os}\rs (r\times g) \, dx+\ds\int_{\dos}r\times (\Sigma\, n) \,ds,
\label{ConsMom2}\\
\os=X_s(t,\Omega_s^0),
\label{bougeos}\\
\frac{\partial X_s}{\partial t}=x_G'(t)+\ou\times r(X_s(t),t),
\label{bougeX}
\end{align}
where $n$ denotes the unit outward normal on $\partial\Omega_s(t)$, and $\Sigma$ is the fluid stress tensor.\\
In this formulation  the last two equations  describe the rigid motion of $\os$. In order to give a weak formulation of this problem, let us introduce some function spaces. From now on, $u$ will denote the velocity field on the whole computational domain $\om$. We define
$$\mathcal{V}=\{u\in H^1_0(\Omega),\div u=0\},\quad \mathcal{H}=\{u\in L^2(\Omega),\;\div u=0,\;u\cdot n=0\text{ on }\dom\}$$
and, with the notations of \cite{SanMarStaTuc02} extended to the three dimensional  case,
\begin{multline*} \mathcal{K}(t)=\{u\in \mathcal{V},\;D(u)=0 \text{ in }\om_s(t)\}\\=\{u\in  \mathcal{V},\;\exists (V_u,\omega_u)\in\R^3\times \R,\; u=V_u+\omega_u\times r \text{ in }\om_s(t)\}.\end{multline*}
Next we define a density on the whole domain by setting
$\rho=\rho_s\chi_{\Omega_s(t)}+\rho_f\chi_{\Omega_f(t)}$, where $\chi_A$ denotes the characteristic function of set $A$, which takes value $1$ inside $A$ and $0$ outside. Let us note $Q=\Omega\times]0,T[$. Then the weak formulation is the following \cite{Hillairet2006}: given initial conditions $H^0=\chi_{\Omega_s^0}$, $\rho^0=\rho_sH^0+\rho_f(1-H^0)$ and $u=u^0\in \mathcal{K}(0)$,  find  $(x,t)\to(\rho(x,t),u(x,t),H(x,t))$ such that
\begin{equation}\label{forme_faible}
\begin{cases}
u\in L^{\infty}(0,T, \mathcal{H}) \cap L^2(0,T, \mathcal{V}),\quad H,\rho\in\mathcal{C}(0,T;L^q(\Omega))\;\forall q\ge 1,\\
u(t)\in \mathcal{K}(t) \text{ for a.e. }t\in]0,T[,\text{ with }\os=\{x\in\Omega,\;H(x,t)=1\},\\
\forall\xi\in H^1(Q)\cap L^2(0,T;\mathcal{K}(t)),\\
\qquad\displaystyle\int_{\Omega}\left[\rho u\cdot\partial_t \xi+(\rho (u\cdot\nabla)u-2\mu D(u)):D(\xi)+\rho g\cdot\xi\right]\, dx=\dfrac{d}{dt}\int_{\Omega}\rho u\cdot\xi \, dx,\\
\forall\psi\in\mathcal{C}^1(Q),\;\psi(T)=0,\\
\qquad\displaystyle\int_0^T\int_{\Omega}H\dfrac{\partial \psi}{\partial t} +H u\cdot\nabla\psi \, dx dt+ \displaystyle\int_{\Omega} H^0 \psi(0) \, dx=0,\\
\qquad\displaystyle\int_0^T\int_{\Omega}\rho\dfrac{\partial \psi}{\partial t} +\rho u\cdot\nabla\psi \, dx dt+ \displaystyle\int_{\Omega} \rho^0 \psi(0) \, dx=0.
\end{cases}
\end{equation}
Note that we could equivalently have defined $\rho=\rho_sH+\rho_f(1-H)$, as $\rho^0$ is piecewise constant, and transported by the same velocity field than $H$. 
\subsection{Penalized problem}
For $\eta>0$, we consider the following penalized problem:\\
given $(\re(0)=\rho^0, \ue(0)=\ue^0, \He(0)=\chi_{\Omega_s^0})$, to find $(\re,\ue,\pe,\He)$, with
$$\rho_{\eta},H_\eta\in L^\infty(]0,T[\times\Omega),\quad u_\eta\in L^\infty(0,T;\mathcal{H})\cap L^2(0,T;\mathcal{V}),\quad p_\eta\in L^2(Q)$$
solution on $Q$ of
\begin{eqnarray}
\rho_{\eta}\left(\dfrac{\partial u_{\eta}}{\partial t} +(u_{\eta}\cdot\nabla)u_{\eta}\right)-2\mu \dv(D(u_{\eta}))+\nabla p_{\eta}+\dfrac{1}{\eta}\re H_{\eta}(u_{\eta}-u_{\eta ,s})=\rho_{\eta}g \label{ns1penal}\\
\dv u_{\eta}=0\label{ns2penal}\\
u_{\eta ,s}=\frac1\Me\displaystyle\int_\Omega \rho_{\eta} u_{\eta} H_{\eta} \, dx
+\left(J_{\eta}^{-1}\,\displaystyle\int_\Omega \rho_{\eta} (r_{\eta}\times u_{\eta}) H_{\eta} \, dx\right)\times r_{\eta}\label{uspenal}\\
{\rho_{\eta}}_t+u_{\eta}.\nabla \rho_{\eta}=0\label{rhopenal}\\
{H_{\eta}}_t+u_{\eta ,s}.\nabla H_{\eta}=0\label{hpenal}
\end{eqnarray}
We set $\oes=\{x\in\Omega,\;H_\eta(x,t)=1\}$.
In equation (\ref{uspenal}) we divided the first term by $M_\eta= \int_\om\re\He \, dx$, which is not constant in time in general. On contrary we have $|\oes|=\int_\om \He \, dx = |\Omega_s^0|$ since $\ues$ is divergence free and $\He$ vanishes on $\dom$ (we assumed no contact of the solid with $\dom$). The inertia tensor is defined as
$$\Je=\int_\om\re\He(r_{\eta}^2\I-r_{\eta}\otimes r_{\eta})\, dx=\int_\oes\re(r_{\eta}^2\I-r_{\eta}\otimes r_{\eta})\, dx.$$
with $r_{\eta}=x-x_{G\eta}=x-\int_\om\re\He x\, dx$.\\
For $a\in\R^3\setminus\{0\}$, $a^T\Je a=\int_\oes\re|r_{\eta}\times a|^2\, dx\ge\min(\rs,\rf)\int_\oes|r_{\eta}\times a|^2\, dx$ (see estimate (\ref{bornerhoH})). This last quantity being strictly positive for an open nonempty integration set, $J_\eta$ is nonsingular (we recall that $|\oes|=|\Omega_s^0|>0$).

Before stating our convergence result, a few remarks are in order. 

First one may wonder about the well-posedness of the above problem. However it will directly result from the \textit{a priori} estimates and convergence arguments given in the following that this problem does have at least a weak solution. Indeed, these arguments could easily be used to show the convergence of the solutions to a linearized  version -  or finite-dimension approximation -  of (\ref{ns1penal})-(\ref{hpenal}). Next we can observe that in this model we penalize the difference between $\ue$ and the projection of $\ue$ onto velocity fields rigid in the solid domain, namely $\ues$ (see lemma \ref{propR1} below). The density is transported with the original velocity field so that estimates on the Navier-Stokes equations are easier to obtain. The characteristic function is transported by the rigid velocity so that the shape of $\oes$ remains undeformed (this is exactly the Eulerian counterpart of  (\ref{bougeos}-\ref{bougeX})). As observed in \cite{Coquerelle2008} this has a practical importance (in particular it means that the rigid solid can be recovered exactly through simple algebra from its initial shape). As far as numerical analysis is concerned, it also provides "for free"  regularity properties on the computed rigid body, as soon as the initial body is smooth. The price to pay is that the level sets of $\re$ and $\He$ do not coincide, i.e. in  general we do not have $\re\He=\rs$ as in the non penalized formulation.  Note also that in principle we should prescribe a boundary value for $\He$ on $\dom$ when $\ues$ is inward. Since our analysis is restricted to times when the solid body does not approach the boundary of the computational box, we can take this boundary value to be zero, which amounts to solve (\ref{hpenal}) on $\R^n$ and take its restriction to $\om$.

In the following sections we will prove the convergence of at least a subsequence of $(\re,\ue,\pe,\He)$ to the weak solution defined above. Next section starts with some \textit{a priori} estimates which will provide weak convergence of subsequences. In section \ref{sec4} we will have to use more sophisticated tools adapted from \cite{SanMarStaTuc02} to get some strong convergence in $\ue$ which will allow us to pass to the limit in nonlinear terms of $(P_\eta)$. More precisely we prove the following result.

\begin{theorem}\label{cvresult}
Under the regularity assumptions of section \ref{sec2}, let $(\re,\ue,\pe,\He)$ a solution of $(P_\eta)$. Then there exists a subsequence of $(\re,\ue,\He)$ and functions $(\rho,u,H)$  such that 
$$\re\to\rho,\quad\He\to H \text{ strongly in }C(0,T;L^q(\Omega))\text{ for all }q\ge 1,$$
$$\ue\to u \text{ strongly in }L^2(Q)\text{ and weakly in }L^2(0,T;H_0^1(\Omega))\cap L^\infty(0,T;L^2(\Omega))$$
and such that $(\rho,u,H)$, is a solution of (\ref{forme_faible}).
\end{theorem}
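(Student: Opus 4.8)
The plan is to establish convergence of the penalized solutions $(\re,\ue,\pe,\He)$ through the standard three-stage strategy for nonlinear PDEs: derive uniform-in-$\eta$ a priori estimates, extract weakly convergent subsequences via compactness, and pass to the limit in the variational formulation. The crux of the matter will be upgrading weak convergence to the strong $L^2(Q)$ convergence of $\ue$ needed to handle the nonlinear convective term, and controlling the penalization term $\frac1\eta\re\He(\ue-\ues)$ as $\eta\to0$.

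First I would derive the energy estimate by formally testing (\ref{ns1penal}) with $\ue$. Using $\div\ue=0$, the skew-symmetry of the convective term, and the sign of the penalization contribution $\frac1\eta\int_\om\re\He|\ue-\ues|^2\ge0$ (which is nonnegative since $\ues$ is precisely the $L^2$-projection of $\ue$ onto rigid fields weighted by $\re\He$, cf.\ the referenced lemma \ref{propR1}), one obtains a bound of the form
\begin{equation*}
\sup_{t}\int_\om\re|\ue|^2\,dx+\mu\int_0^T\!\!\int_\om|D(\ue)|^2\,dx\,dt+\frac1\eta\int_0^T\!\!\int_\om\re\He|\ue-\ues|^2\,dx\,dt\le C,
\end{equation*}
with $C$ independent of $\eta$, using the density bounds (the estimate (\ref{bornerhoH}) giving $\mn(\rs,\rf)\le\re\le\mx(\rs,\rf)$) and Gronwall on the forcing term $\re g$. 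This immediately yields uniform bounds for $\ue$ in $L^\infty(0,T;\mathcal H)\cap L^2(0,T;\mathcal V)$ via Korn's inequality, and crucially shows the penalization defect $\ue-\ues$ tends to zero in $L^2(Q)$ at rate $\sqrt\eta$, which will force the limit $u$ to lie in $\mathcal K(t)$ for a.e.\ $t$. The transport equations (\ref{rhopenal})--(\ref{hpenal}) give the continuity-in-time bounds for $\re,\He$ in $C(0,T;L^q)$ for every $q$, since both are transported by divergence-free fields and the initial data are characteristic/piecewise-constant functions.

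Next I would extract subsequences. Weak-$*$ and weak limits $u$, $\rho$, $H$ follow from the uniform bounds. For the strong convergence of $\ue$ in $L^2(Q)$ the key is a time-compactness argument: I would bound $\partial_t(\re\ue)$ in some negative-order space $L^2(0,T;(\mathcal V)')$ (or $H^{-1}$) directly from (\ref{ns1penal}), treating the penalization term by pairing against divergence-free test functions that are rigid in the solid—on such tests the penalization term vanishes by the projection property, which is exactly why testing against $\mathcal K(t)$ in (\ref{forme_faible}) eliminates both the pressure and the penalization. Combined with the spatial $H^1$ bound and the Aubin--Lions--Simon lemma, this delivers strong $L^2(Q)$ convergence of $\ue$. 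The convergence of $\re$ and $\He$ strongly in $C(0,T;L^q)$ comes from the transport structure and a renormalization/DiPerna--Lions type argument, using that the velocity fields $\ue$ and $\ues$ are uniformly bounded in $L^2(0,T;H^1)$.

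Finally I would pass to the limit in the weak formulation. Testing (\ref{ns1penal}) against $\xi\in H^1(Q)\cap L^2(0,T;\mathcal K(t))$ kills the pressure (since $\div\xi=0$) and the penalization term (since $\ue-\ues\perp\mathcal K$ in the weighted inner product, so the pairing is $O(\sqrt\eta)\to0$). The linear viscous and time-derivative terms pass by weak convergence; the nonlinear term $\re(\ue\cdot\nabla)\ue:D(\xi)$ passes using strong $L^2(Q)$ convergence of $\ue$ against the weak $L^2$ convergence of $\nabla\ue$, together with strong convergence of $\re$. The transport terms for $H$ and $\rho$ pass similarly, and the identity $\os=\{H=1\}$ together with $u(t)\in\mathcal K(t)$ is recovered from the strong limits. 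The main obstacle, as anticipated, is the strong convergence of $\ue$: securing a uniform time-derivative estimate despite the $1/\eta$ singular term requires the careful use of rigid divergence-free test functions to annihilate the penalization, and this is precisely where the tools adapted from \cite{SanMarStaTuc02} referenced in the text enter.
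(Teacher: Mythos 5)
Your overall architecture (energy estimate, DiPerna--Lions for the transport equations, passing to the limit against rigid divergence-free test functions) matches the paper, but the step on which everything hinges --- strong $L^2(Q)$ convergence of $\ue$ --- contains a genuine gap. You propose to bound $\partial_t(\re\ue)$ in $L^2(0,T;\mathcal{V}')$ (or $H^{-1}$) ``directly from (\ref{ns1penal})'' and then invoke Aubin--Lions--Simon. This cannot work as stated, for two linked reasons. First, against a \emph{generic} test function $\xi\in\mathcal{V}$ the penalization term is only controlled by
$\frac{1}{\eta}\Vert \He(\ue-\ues)\Vert_{L^2(Q)}\Vert\xi\Vert_{L^2} = O(\eta^{-1/2})\Vert\xi\Vert_{L^2}$
by (\ref{hu_bornee_1}), so no uniform-in-$\eta$ bound on $\partial_t(\re\ue)$ in $(\mathcal{V})'$ holds; the term is annihilated only when $\xi$ is rigid on $\oes$. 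Second, the set of such test functions depends on $t$ (and on $\eta$), because the solid is moving; the Aubin--Lions machinery requires a \emph{fixed} triple of spaces, and the paper states explicitly at the beginning of section \ref{sec4} that the classical fractional-time-derivative/compactness arguments of Lions and Temam are unavailable for precisely this reason. So the one estimate your compactness argument rests on is false, and the remedy is not a technical patch but the actual content of the proof.

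What the paper does instead, and what is absent from your proposal, is the following mechanism adapted from \cite{SanMarStaTuc02}: introduce the $\sigma$-enlarged solid domains $\Omega_{s,\sigma}(t)$, the constraint spaces $\mathcal{K}^r_{\sigma}(t)$ and the orthogonal projections $P^r_{\sigma}(t)$; reduce the strong convergence to the three limits (\ref{part1})--(\ref{part3}) via the decomposition (\ref{decomp}) (which uses the lower bound $\rho_{min}$ on the density); prove (\ref{part1})--(\ref{part2}) by solving exterior Stokes problems with rigid boundary data and using the boundary-layer estimate of Lemma \ref{stokes} together with the Egorov-type Lemma \ref{egorov}; and, for (\ref{part3}), slice $[0,T]$ into intervals $I_k$ short enough that $\Omega^\eta_{s}(t)\subset\Omega_{s,\sigma/2}(k\tau)\subset\Omega_{s,\sigma}(t)$ for all $t\in I_k$ (Lemma \ref{inclusion_omega_s} plus the uniform bound on the rigid velocity). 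Only on such an interval is the constraint space $\mathcal{K}^1_{\sigma/2}(k\tau)$ \emph{fixed in time}, so that testing kills the penalization term and Aubin--Simon applies --- and even then it yields compactness only of the \emph{projected} momentum $P^0_{\sigma/2}(k\tau)(\re\ue)$ in $L^2(I_k,(\mathcal{K}^r_{\sigma/2}(k\tau))^*)$, not of $\re\ue$ itself; strong convergence of $\ue$ is then recovered by letting $\eta\to0$ first and $\sigma\to0$ second in (\ref{decomp}). Your closing sentence correctly identifies that the tools of \cite{SanMarStaTuc02} must enter here, but the proposal supplies none of this structure (no enlarged domains, no projections, no time-slicing, no double limit), and the mechanism you do propose in its place fails. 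Since this is the crux of Theorem \ref{cvresult}, the proposal as written does not constitute a proof.
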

Before proceeding to the proof, let us point out a few remarks.
For a sake of simplicity in the notations we have stated our penalization method and theorem for a single rigid body. It will be apparent from the proof below that it readily extends to the case of several bodies.
Furthermore, the time to which the convergence result is restricted, is essentially the time for which contact of the rigid body do not touch the boundary of $\Omega$ (in the case of several bodies it would be the time on which we can ensure that contact between bodies do not happen). As a result if we consider periodic boundary conditions and a single body convergence holds for all times.

\section{Proof of theorem \ref{cvresult}}
The following lemma states that $\ues$, as defined in $(P_\eta)$, is the projection of $\ue$ onto velocity fields which are rigid on $\oes$. 
\begin{lemma}\label{propR1}
Let $\xi$ be a rigid velocity field, i.e. such that $\xi(x)=V_\xi+\omega_\xi\times r(x)$ for some constant vectors $V_\xi\in\R^3$ and $\omega_\xi\in\R^3$. Then if $\ues$ is defined by (\ref{uspenal}) there holds
\begin{equation}
\displaystyle\int_{\Omega}\rho_{\eta}H_{\eta}(u_{\eta}-u_{\eta,s})\cdot\xi \, dx = 0.
\end{equation}
Moreover, the result holds if $\xi$ is a time dependent velocity field rigid in $\oes$ at time $t$.
\end{lemma}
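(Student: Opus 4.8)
The plan is to exploit the decomposition of $\ues$ into a translational and a rotational part. Writing
$$V_{\ue}=\frac1\Me\int_\om\re\ue\He\,dx,\qquad \omega_{\ue}=\Je^{-1}\int_\om\re(r_{\eta}\times\ue)\He\,dx,$$
so that $\ues=V_{\ue}+\omega_{\ue}\times r_{\eta}$ by (\ref{uspenal}), I would first record the single property that does all the work: by the very definition of the center of mass $x_{G\eta}$, the weighted first moment of $r_{\eta}$ vanishes,
$$\int_\om\re\He\,r_{\eta}\,dx=0.$$
Since any rigid field $\xi=V_\xi+\omega_\xi\times r$ can be rewritten, by absorbing a constant into the translational part, in the form $\xi=V+\omega\times r_{\eta}$ with constant $V,\omega\in\R^3$, it suffices to treat fields of this last form, and then to split the target integral into a translational and a rotational contribution.

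For the translational part I must show $\int_\om\re\He(\ue-\ues)\,dx=0$. Integrating $\ues$ and using the vanishing first moment to kill the $\omega_{\ue}\times r_{\eta}$ term gives $\int_\om\re\He\ues\,dx=\Me V_{\ue}=\int_\om\re\ue\He\,dx$, which is exactly the definition of $V_{\ue}$; hence the difference integrates to zero, and the term $V\cdot\int_\om\re\He(\ue-\ues)\,dx$ vanishes.

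For the rotational part I would use the scalar triple product identity $(\ue-\ues)\cdot(\omega\times r_{\eta})=\omega\cdot\big(r_{\eta}\times(\ue-\ues)\big)$, reducing the claim to
$$\int_\om\re\He(r_{\eta}\times\ues)\,dx=\int_\om\re\He(r_{\eta}\times\ue)\,dx.$$
Substituting $\ues=V_{\ue}+\omega_{\ue}\times r_{\eta}$, the $V_{\ue}$ contribution again drops out by the vanishing first moment, while the vector triple product identity $r_{\eta}\times(\omega_{\ue}\times r_{\eta})=(|r_{\eta}|^2\I-r_{\eta}\otimes r_{\eta})\omega_{\ue}$ turns the remaining integral into $\Je\,\omega_{\ue}$ by the definition of the inertia tensor $\Je$. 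By the definition of $\omega_{\ue}$ this equals $\int_\om\re\He(r_{\eta}\times\ue)\,dx$, closing the identity. This recognition of the inertia tensor — the observation that $\int_\om\re\He\,r_{\eta}\times(\omega_{\ue}\times r_{\eta})\,dx=\Je\omega_{\ue}$ cancels against $\omega_{\ue}=\Je^{-1}\int_\om\re\He(r_{\eta}\times\ue)\,dx$ — is really the only nontrivial step; everything else is the vanishing of the first moment and bookkeeping.

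Finally, for the time-dependent statement, I would note that $\He$ is transported by the divergence-free field $\ues$ and therefore remains the characteristic function of $\oes$, so the integrand is supported on $\oes$; there $\xi(\cdot,t)$ coincides with a genuine rigid field $V(t)+\omega(t)\times r_{\eta}$, and applying the constant-coefficient argument at (almost) every fixed $t$ yields the conclusion.
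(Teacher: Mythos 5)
Your proof is correct and follows essentially the same route as the paper's: both arguments rest on the vanishing first moment $\int_\Omega \rho_\eta H_\eta r_\eta\,dx=0$ together with the recognition that $\int_\Omega \rho_\eta H_\eta\, r_\eta\times(\omega_u\times r_\eta)\,dx = J_\eta\omega_u$ (the paper uses the scalar form of the triple-product identity, you use the vector form), after which the definitions of $V_u$ and $\omega_u$ force all terms to cancel. Your reorganization into conservation of linear and angular momentum, the explicit re-centering of $\xi$ at $x_{G\eta}$, and the support argument for the time-dependent case amount to cleaner bookkeeping of the same computation.
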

\begin{proof}
Let the mean translation and angular velocities be defined as
\begin{equation*}
V_u=\frac1\Me\displaystyle\int_\Omega \rho_{\eta}H_{\eta} u_{\eta} \, \, dx
\qquad\omega_u=J^{-1}_{\eta}\,\displaystyle\int_\Omega \rho_{\eta}H_{\eta} (r_{\eta}\times u_{\eta}) \, dx
\end{equation*}
then
\begin{eqnarray*}
\displaystyle\int_{\Omega}\rho_{\eta}H_{\eta}(u_{\eta}-u_{\eta,s})\cdot\xi \, dx & = & \displaystyle\int_{\Omega}\rho_{\eta}H_{\eta}\left[u_{\eta}-\left(V_{u}
+\omega_{u}\times r_{\eta}\right)\right]\cdot\left[V_{\xi}+\omega_{\xi}\times r_{\eta}\right] \, dx\notag\\
& = & V_{\xi}\cdot\displaystyle\int_{\Omega}\rho_{\eta}H_{\eta}u_{\eta} \, dx
+\omega_{\xi}\cdot\displaystyle\int_{\Omega}\rho_{\eta}H_{\eta}(r_{\eta} \times u_{\eta}) \, dx
- V_{u}\cdot V_{\xi} \displaystyle\int_{\Omega}\rho_{\eta}H_{\eta} \, dx\\
& - & V_{u} \cdot\left(\omega_{\xi}\times\displaystyle\int_{\Omega}\rho_{\eta}H_{\eta}r_{\eta} \, dx\right)
- V_{\xi} \cdot\left(\omega_{u}\times\displaystyle\int_{\Omega}\rho_{\eta}H_{\eta}r_{\eta} \, dx\right)\notag\\
& - & \displaystyle\int_{\Omega}\rho_{\eta}H_{\eta}(\omega_{u} \times r_{\eta})\cdot(\omega_{\xi} \times r_{\eta}) \, dx\notag\\
& = & V_{\xi}\cdot(M_{\eta} V_{u})
+\omega_{\xi}\cdot(J_{\eta}\, \omega_{u})
- V_{u} \cdot(M_{\eta}V_{\xi})\\
& - & V_{u}\cdot \left(\omega_{\xi}\times\displaystyle\int_{\Omega}\rho_{\eta}H_{\eta}r_{\eta} \, dx\right)
-  V_{\xi}\cdot \left(\omega_{u}\times\displaystyle\int_{\Omega}\rho_{\eta}H_{\eta}r_{\eta} \, dx\right)\notag\\
& - & \displaystyle\int_{\Omega}\rho_{\eta}H_{\eta}(\omega_{u} \times r_{\eta})\cdot(\omega_{\xi} \times r_{\eta}) \, dx.
\end{eqnarray*}
As $(\omega_{u} \times r_{\eta})\cdot(\omega_{\xi} \times r_{\eta})=(\omega_{\xi}\cdot\omega_{u})r_{\eta}^2-(r_{\eta}\cdot\omega_{\xi})(r_{\eta}\cdot\omega_{u})$, we have $\displaystyle\int_{\Omega}\rho_{\eta}H_{\eta}(\omega_{u} \times r_{\eta})\cdot(\omega_{\xi} \times r_{\eta}) \, dx=\omega_{\xi}\cdot(J_{\eta}\, \omega_{u})$.\\
Finally, by definition of $r_{\eta}$, $\displaystyle\int_{\Omega}\rho_{\eta}H_{\eta}r_{\eta} \, dx=0$, and we get
\begin{eqnarray*}
\displaystyle\int_{\Omega}\rho_{\eta}H_{\eta}(u_{\eta}-u_{\eta,s})\cdot\xi \, dx & = &  \omega_{\xi}\cdot(J_{\eta}\,\omega_{u})-\omega_{\xi}\cdot(J_{\eta}\,\omega_{u})=0.
\end{eqnarray*}
\end{proof}
\subsection{Estimates for transport and Navier-Stokes equations}
In all the sequel, $C$ denotes a positive constant. At this stage, we consider a given time interval $[0,T]$. The value to which $T$  must be restricted will be given later in this section.

Standard estimates for transport equations (\ref{rhopenal}) and (\ref{hpenal}) show that $\re$ and $\He$ are bounded in $L^\infty(0,T;L^\infty(\om))$. More precisely, for all time $t\in[0,T]$, 
\be\rho_{min}:=\min(\rs,\rf)\le \re(x,t)\le\max(\rs,\rf)\qquad \He(x,t)\in\{0,1\}\quad\text{a.e. }x\in\om.\label{bornerhoH}\ee
Thus, up to extracting a subsequence, we can assume that
\begin{equation}\label{rho_cv_1}
\rho_{\eta} \rightharpoonup \rho \text{ in } L^{\infty}(0,T,L^{\infty}(\Omega)) \text{ weak*,}
\end{equation}
and
\begin{equation}\label{h_cv_1}
H_{\eta} \rightharpoonup H \text{ in } L^{\infty}(0,T,L^{\infty}(\Omega)) \text{ weak*,}
\end{equation}
where $H$ and $\rho$ satisfy the bounds  (\ref{bornerhoH}). We set $\os=\{x\in\Omega,\;H(x,t)=1\}$. Concerning Navier-Stokes equations, 
multiplying  (\ref{ns1penal}) by $u_{\eta}$ and integrating on $\Omega$, we get:
\begin{eqnarray*}
& & \displaystyle\int_{\Omega} \rho_{\eta}\left(\dfrac{\partial u_{\eta}}{\partial t} +(u_{\eta} \cdot\nabla) u_{\eta}\right) \cdot u_{\eta} \, dx 
- 2\mu \displaystyle\int_{\Omega} \dv(D(u_{\eta})) \cdot u_{\eta} \, dx 
+ \displaystyle\int_{\Omega} u_{\eta}\cdot \nabla p_{\eta} \, dx \\
& + & \dfrac{1}{\eta} \displaystyle\int_{\Omega} \re H_{\eta}(u_{\eta}-u_{\eta ,s})\cdot u_{\eta} \, dx
= \displaystyle\int_{\Omega} \rho_{\eta}g\cdot u_{\eta} \, dx.
\end{eqnarray*}
Classically we have from incompressibility and homogeneous boundary conditions on $\ue$, 
\be
\displaystyle\int_{\Omega} \rho_{\eta}\left(\dfrac{\partial u_{\eta}}{\partial t} +(u_{\eta} \cdot\nabla) u_{\eta}\right) \cdot u_{\eta} \, dx = \frac12\int_{\Omega} \dfrac{\partial (\rho_{\eta}|u_{\eta}|^2)}{\partial t}  \, dx.
\ee
From Lemma \ref{propR1} we get
\begin{equation*}
\displaystyle\int_{\Omega} \re H_{\eta}(u_{\eta}-u_{\eta ,s})\cdot u_{\eta} \, dx=\displaystyle\int_{\Omega} \re H_{\eta}(u_{\eta}-u_{\eta ,s})^2 \, dx,
\end{equation*}
and since $\ue$ is divergence free and vanishes on $\dom$,
\begin{equation*}
\displaystyle\int_{\Omega}  u_{\eta}\cdot\nabla p_{\eta} \, dx=0.
\end{equation*}
Collecting terms we get, since from (\ref{bornerhoH}) $\sqrt{\He}=\He$,
\begin{equation*}
\dfrac{1}{2}\dfrac{d}{dt} {\Vert \sqrt{\rho_{\eta}}u_{\eta} \Vert}^2_{L^2(\Omega)}
+\mu {\Vert D(u_{\eta}) \Vert}^2_{L^2(\Omega)}
+\dfrac{1}{\eta} {\Vert \sqrt{\re}H_{\eta}(u_{\eta}-u_{\eta ,s}) \Vert}^2_{L^2(\Omega)} \leq \Vert \sqrt{\rho_{\eta}}u_{\eta} \Vert_{L^2(\Omega)}{\Vert g \Vert_{L^{\infty}(Q)}}{\Vert \rho_{\eta} \Vert^{\frac{1}{2}}_{L^2(Q)}}
\end{equation*}
which upon time integration on $[0,T]$ gives
\begin{eqnarray*}
{\Vert \sqrt{\rho_{\eta}}(t)u_{\eta}(t) \Vert}^2_{L^2(\Omega)}
& + & 2\mu {\Vert D(u_{\eta}) \Vert}^2_{L^2(Q)}
+\dfrac{2}{\eta} {\Vert \sqrt{\re}H_{\eta}(u_{\eta}-u_{\eta ,s}) \Vert}^2_{L^2(Q)}\\
& \leq & {\Vert \sqrt{\rho_{\eta 0}}u_{\eta 0} \Vert}^2_{L^2(\Omega)}+C \displaystyle\int_0^T \Vert \sqrt{\rho_{\eta}}(s)u_{\eta}(s) \Vert_{L^2(\Omega)}ds.
\end{eqnarray*}
Applying Gronwall Lemma, Poincar\'e inequality and bounds from (\ref{bornerhoH}) gives the following estimates~:
\begin{equation}\label{u_bornee_1}
u_{\eta} \text{ bounded in } L^2(0,T,H^1_0(\Omega)),
\end{equation}
\begin{equation}\label{u_bornee_2}
\sqrt{\rho_{\eta}}u_{\eta}\text{ and }u_{\eta}\text{ bounded in } L^{\infty}(0,T,L^2(\Omega)),
\end{equation}
\begin{equation}\label{hu_bornee_1}
\dfrac{1}{\sqrt{\eta}}\sqrt\re H_{\eta}(u_{\eta}-u_{\eta ,s})\text{ and } \dfrac{1}{\sqrt{\eta}} H_{\eta}(u_{\eta}-u_{\eta ,s})\text{ bounded in } L^2(0,T,L^2(\Omega)).
\end{equation}
Thus we can extract subsequences from $\rho_{\eta}$, $u_{\eta}$ and $H_{\eta}$, still denoted by $\rho_{\eta}$, $u_{\eta}$ and $H_{\eta}$, such that
\begin{equation}\label{u_cv_1}
u_{\eta} \rightharpoonup u \text{ in } L^2(0,T,H^1_0(\Omega)) \text{ weak,}
\end{equation}
\begin{equation}\label{u_cv_2}
\sqrt{\rho_{\eta}}u_{\eta} \rightharpoonup \chi \text{ and }u_{\eta} \rightharpoonup u\text{ in } L^{\infty}(0,T,L^2(\Omega)) \text{ weak*,}
\end{equation}
\begin{equation}\label{u_cv_3}
\sqrt{\re}H_{\eta}u_{\eta} -\sqrt\re H_{\eta}u_{\eta,s} \to 0\text{ and } H_{\eta}u_{\eta} - H_{\eta}u_{\eta,s} \to 0\text{ in } L^2(0,T,L^2(\Omega)) \text{ strong. }
\end{equation}
The identification of $\chi$ with $\sqrt{\rho} u$ results from strong convergence results proved by Lions and DiPerna on transport equations. \cite{DiPerna1989} theorem II.4, (\ref{u_cv_1}) and incompressibility imply
\begin{equation}\label{rho_cv_2}
\rho_{\eta} \to \rho \text{ in } C(0,T,L^q(\Omega)) \text{ strong }\forall q\in [1,+\infty[\\\\
\end{equation}
with $\rho$ solution of
\begin{equation*}
\begin{cases}
{\rho}_t+u\cdot\nabla \rho=0 &\text{on }\Omega\times]0,T[,\\
\rho=\rho_{0} &\text{on } \Omega\times\{0\}.
\end{cases}
\end{equation*}
From this strong convergence we can pass to the limit in the product $\sqrt{\re}\ue$~: given $v\in L^q(0,T;L^r(\om))$ with $q>2$ and $r>\frac65$, we write
$$\int_0^T\int_\om(\sqrt{\rho_{\eta}}u_{\eta} - \sqrt\rho u)v \, dx dt=\int_0^T\int_\om (u_{\eta} -  u)\sqrt\rho v \, dx dt+\int_0^T\int_\om (\sqrt\re - \sqrt \rho) \ue v \, dx dt.$$
From the injection of $H^1$ into $L^6$ in dimension less or equal to $3$ the first integral converges toward $0$. For the second integral we use the strong convergence (following (\ref{rho_cv_2})) of $\sqrt{\re}$ in $L^s$ for a $s$ such that $\ue v$ is in $L^{s'}$ where $s'$ is the conjugate exponent of $s$. Thus we have
\begin{equation}
\sqrt{\rho_{\eta}}u_{\eta} \rightharpoonup \sqrt\rho u \text{ in } L^{q}(0,T,L^r(\Omega)) \text{ weak, for all }q<2,r<6.
\end{equation}

\subsection{Setting T and passing to the limit in the rigid velocity}
This rigid velocity is defined by
\begin{equation*}
u_{\eta ,s}(x,t)=u_{\eta,G}(t)+\omega_{\eta}(t)\times r_{\eta}(x,t),
\end{equation*}
with
\begin{equation*}
u_{\eta,G}(t)=\frac{1}{M_\eta}\int_\Omega \rho_{\eta} u_{\eta}H_{\eta} \, dx\quad\text{ and }
\quad\omega_{\eta}(t)=J_\eta^{-1}\,\displaystyle\int_\Omega \rho_{\eta} (r_{\eta}\times u_{\eta}) H_{\eta} \, dx.
\end{equation*}
First we note that $M_\eta=\int_\Omega \re\He \, dx$ is bounded from below independently of $\eta$ since $\re$ is bounded from below and $\int_\Omega \He \, dx=|\Omega_s^0|>0$ does not depend on $\eta$ or $t$. From the bounds on $\re$, $\He$ and $\ue$ it is straightforward to show that \begin{equation*}
u_{\eta ,G}(t)\text{ bounded in }L^{\infty}(0,T).
\end{equation*}
Likewise, from the definition of $\Je$ we observe that for $a\in\R^3\setminus\{0\}$ 
$$a^T\Je a\ge\min(\rs,\rf)\int_\os|r_{\eta}\times a|^2\, dx>0.$$
Moreover, the initial solid is regular and transported by a rigid velocity. We thus know that  there is a ball of radius $R>0$ centered on the center of gravity $x_{G\eta}$ included into $\oes$. Then the above estimates implies
$$a^T\Je a\ge\min(\rs,\rf)\int_{B(x_{G\eta},R)}|r_{\eta}\times a|^2\, dx=\min(\rs,\rf)\int_{B(0,R)}|x\times a|^2\, dx=C(R)|a|^2$$
with $C(R)=\frac{2R^5\pi}{15}>0$. Taking $a=\Je^{-\frac12}b$ ($\Je$ is symmetric) we get for all $b\in  \R^3\setminus\{0\}$, 
$$b^T\Je^{-1} b=|\Je^{-\frac12}b|^2\le\frac1{C(R)}|b|^2,$$
which proves that each coefficient of $\Je^{-1}$ is bounded independently of $\eta$ and $t$. From the bounds on $\ue$, $\He$ and $\re$ this implies that
\begin{equation*}
\omega_{\eta}(t)\text{ is bounded in }L^{\infty}(0,T).
\end{equation*}
In particular this implies that the solid velocity $u_{\eta,s}$ is bounded in $L^{\infty}$ by some constant $M$ independent of $\eta$ and time. We can now define the maximum time for which the convergence result will be proved. If we denote by $d_0$ the initial distance between  solid and the boundary $\partial\Omega$ then choosing for instance $T=d_0/2M$ ensures that the body will not touch the boundary for $t\in [0,T]$. In all the sequel we will assume this value of $T$.

From the above estimates we can ensure that    there exists $u_{G}(t)$ and $ \omega(t)$ in $L^\infty(0,T)$ such that, up to the extraction of subsequences,  
\begin{equation*}
u_{\eta,s} \rightharpoonup u_s:=u_G+\omega\times r \text{ in } L^{\infty}(0,T,L^\infty(\om)) \text{ weak*.}
\end{equation*}
Now we point out that taking the gradient of the rigid velocity field $\ues$ gives
$$\nabla\ues = \begin{pmatrix}0&-\omega_\eta^3&\omega_\eta^2\\\omega_\eta^3&0&-\omega_\eta^1\\-\omega_\eta^2&\omega_\eta^1&0\end{pmatrix}$$
so that the $\nabla\ues$ (and all subsequent space derivatives) is also bounded in $L^\infty(0,T;L^\infty(\om))$. In particular
\begin{equation}\label{us_cv_1}
u_{\eta,s} \rightharpoonup u_s \text{ in } L^{2}(0,T,W^{1,\infty}(\om)) \text{ weak*.}
\end{equation}

We now wish to prove that $u_s$ (or equivalently $u_G$ and $\omega$) has a similar structure as $\ues$, that is, to pass to the limit in the expression defining $\ues$. Using (\ref{us_cv_1}), the already mentioned compactness results of \cite{DiPerna1989}, applied to the transport equation on $\He$ now gives\begin{equation}\label{h_cv_2}
H_{\eta} \to H \text{ a.e. and in } C(0,T,L^p(\Omega)) \text{ strong }\forall p\in [1,+\infty[\\\\
\end{equation}
with $H$ verifying
\begin{equation*}
\begin{cases}
{H}_t+u_{s}.\nabla H=0 \text{         on } \R^n\times(0,T)\\
H=H_{0} \text{         on } \R^n\times\{0\}.
\end{cases}
\end{equation*}
Note that this Cauchy problem has been set in $\R^n$ because $u_s$ does not vanish on $\partial\Omega$, but $H$ vanishes outside $\Omega$. However we can prove by passing to the limit in (\ref{u_cv_3}) that $Hu=Hu_s$. Thus $\mbox{div$(uH)$}=\mbox{div$(u_sH)$}$ and $H$ verifies a transport equation with velocity field $u$ on $\Omega$ (note that no boundary conditions are needed on $\partial \Omega$ since $u$ is zero on the boundary). 

This convergence gives us the strong convergence of $r_{\eta}$ in $\mathcal{C}(0,T,L^p(\Omega))$, $\forall p \geq 1$, and from (\ref{rho_cv_2}),(\ref{h_cv_2}) and (\ref{u_cv_1}) we can easily pass to the limit in the expression of $u_{\eta,G}$ and $\omega_\eta$ to get 
\begin{equation*}
u_{G}(t)=\dfrac{\displaystyle\int_\Omega \rho u H \, dx}{\displaystyle\int_\Omega \rho H \, dx}\quad\text{ and }
\quad\omega(t)={\left(\displaystyle\int_\Omega \rho (r^2\mathbb{I}-r\otimes r) H \, dx\right)}^{-1}\displaystyle\int_\Omega \rho (r\times u) H \, dx.
\end{equation*}

\subsection{Strong convergence of $u_{\eta}$}
\label{sec4}
The remaining part of the proof is more technical since we aim to prove the strong convergence of at least a subsequence of $\ue$ in order to be able to pass to the limit in the inertial term of Navier-Stokes equations. Classically, this is obtained thanks to a Fourier transform in time which provides an estimate on some fractional time derivative of $\ue$ which brings compactness \cite{Lions, Temam1979}. Here these technics can not be used since the solid is moving. We instead rely on tools developed in \cite{SanMarStaTuc02}.

Thereafter we will use, for $\sigma>0$ and $r\in[0,1]$, the following notations 
\begin{itemize}
\item $\Omega_{s,\sigma}(t)=\{x\in\Omega, d(x,\Omega_s(t))<\sigma\}$,
\item $\mathcal{V}^0=\{v\in L^2(\Omega),\;\dv v=0,\;v\cdot n=0 \text{ on } \partial\Omega\}$,
\item $\mathcal{V}^r=\{v\in H^r(\Omega),\;\dv v=0,\;v=0 \text{ on } \partial\Omega\}$ (for $r>0$),
\item $\mathcal{K}^r_{\sigma}(t)=\{v(t)\in \mathcal{V}^r,D(v(t))=0 \text{ in } \mathcal{D}'(\Omega_{s,\sigma}(t))\}$ (which is a closed subset of $\mathcal{V}^r$),
\item $P^r_{\sigma}(t)$ the orthogonal projection of $\mathcal{V}^r$ on $\mathcal{K}^r_{\sigma}(t)$.
\end{itemize}
To prove the strong convergence of a subsequence of $\ue$ in  $L^2(Q)$ we write
\begin{equation*}
\displaystyle\int_0^T\int_{\Omega} \vert u_{\eta}-u \vert^2 \, dx dt  \leq  \dfrac{1}{\rho_{min}}\left(\displaystyle\int_0^T\int_{\Omega} \vert \rho(u^2_{\eta}-u^2) \vert \, dx dt
+\displaystyle\int_0^T\int_{\Omega} \vert 2\rho u \cdot (u-u_{\eta}) \vert \, dx dt\right).
\end{equation*}
From (\ref{u_cv_2}) the second integral on the right side converges to 0, thus
\begin{equation*}
\displaystyle\int_0^T\int_{\Omega} \vert u_{\eta}-u \vert^2 \, dx dt   \leq  \dfrac{1}{\rho_{min}}\left(\displaystyle\int_0^T\int_{\Omega} \vert \rho_{\eta}u^2_{\eta}-\rho u^2 \vert \, dx dt
+\displaystyle\int_0^T\int_{\Omega} \vert (\rho_{\eta}-\rho)u^2_{\eta} \vert \, dx dt\right).
\end{equation*}
Moreover, by (\ref{u_bornee_1}) and (\ref{rho_cv_2}) the second integral on the right hand side converges to $0$, and
\begin{eqnarray*}
\displaystyle\int_0^T\int_{\Omega} \vert u_{\eta}-u \vert^2 \, dx dt & \leq & \dfrac{1}{\rho_{min}}\left(\displaystyle\int_0^T\int_{\Omega} \vert \rho_{\eta}u_{\eta}\cdot  P^r_{\sigma}(u_{\eta})-\rho u\cdot  P^r_{\sigma}(u) \vert \, dx \, dt\right.\notag\\
& + & \displaystyle\int_0^T\int_{\Omega} \vert \rho_{\eta}u_{\eta}\cdot (u_{\eta}-P^r_{\sigma}(u_{\eta})) \vert \, dx \,dt
+  \left.\displaystyle\int_0^T\int_{\Omega} \vert \rho u\cdot (P^r_{\sigma}(u)-u) \vert \, dx \,dt + l_{\eta}\right) \notag\\
& \leq & \dfrac{1}{\rho_{min}}(\Vert \rho_{\eta}u_{\eta}\cdot P^r_{\sigma}(u_{\eta})-\rho u\cdot P^r_{\sigma}(u) \Vert_{L^1(Q)}\notag\\
& + & \Vert \rho_{\eta} \Vert_{L^{\infty}(Q)}\Vert u_{\eta} \Vert_{L^2(Q)}\Vert P^r_{\sigma}(u_{\eta})-u_{\eta} \Vert_{L^2(Q)}\notag\\
& + & \Vert \rho  \Vert_{L^{\infty}(Q)}\Vert u \Vert_{L^2(Q)}\Vert P^r_{\sigma}(u)-u \Vert_{L^2(Q)}+ l_{\eta}
\end{eqnarray*}
where $l_{\eta}\rightarrow 0$ when $\eta\rightarrow 0$.
Finally, as $\rho_{\eta}$ is bounded in $L^{\infty}(0,T,L^{\infty}(\Omega))$ and using (\ref{u_bornee_1}), we get
\begin{eqnarray}\label{decomp}
\displaystyle\int_0^T\int_{\Omega} \vert u_{\eta}-u \vert^2 \, dx dt & \leq & \dfrac{1}{\rho_{min}}(\Vert \rho_{\eta}u_{\eta}\cdot P^r_{\sigma}(u_{\eta})-\rho u\cdot P^r_{\sigma}(u) \Vert_{L^1(Q)}\notag\\
& + & C\Vert P^r_{\sigma}(u_{\eta})-u_{\eta} \Vert_{L^2(Q)}\notag\\
& + & C\Vert P^r_{\sigma}(u)-u \Vert_{L^2(Q)} + l_{\eta}
\end{eqnarray}
This decomposition shows that the sought convergence essentially amounts to prove  that (up to the extraction of subsequences)
\begin{align}
\lim_{\sigma\to 0}\Vert P^r_{\sigma}(u)-u \Vert_{L^2(Q)}&=0\label{part1}\\
\lim_{\sigma\to 0}\lim_{\eta\to 0}\Vert P^r_{\sigma}(u_{\eta})-u_{\eta} \Vert_{L^2(Q)}&=0\label{part2}\\
\lim_{\sigma\to 0}\lim_{\eta\to 0}\Vert \rho_{\eta}u_{\eta}\cdot P^r_{\sigma}(u_{\eta})-\rho u\cdot P^r_{\sigma}(u) \Vert_{L^1(Q)}&=0\label{part3}
\end{align}
To prove (\ref{part1}-\ref{part3}) we will make use of some Lemma that we state now.
\begin{lemma}\label{egorov}
Let $(f_n)$ be a sequence of functions bounded in $L^p(0,T)$ for some $p>2$ and converging to $0$ almost everywhere on $[0,T]$. Then $(f_n)$ converges strongly to $0$ in $L^2(0,T)$.
\end{lemma}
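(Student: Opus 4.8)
The plan is to combine Egorov's theorem with a Hölder estimate that exploits the gap between the exponents, namely the strict inequality $p>2$. Write $M=\sup_n \Vert f_n\Vert_{L^p(0,T)}<\infty$. The idea is to split $[0,T]$ into a ``good'' set on which the convergence $f_n\to 0$ is uniform and a ``bad'' set of small measure, on which we have no pointwise control but where the $L^p$ bound still lets us estimate the $L^2$ mass crudely.

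First I would fix $\delta>0$ and invoke Egorov's theorem: since $(0,T)$ has finite measure and $f_n\to 0$ almost everywhere, there is a measurable set $E_\delta\subset(0,T)$ with $|(0,T)\setminus E_\delta|<\delta$ such that $f_n\to 0$ uniformly on $E_\delta$. Accordingly I split $\int_0^T|f_n|^2=\int_{E_\delta}|f_n|^2+\int_{(0,T)\setminus E_\delta}|f_n|^2$. On the good set, uniform convergence gives $\int_{E_\delta}|f_n|^2\le T\,\sup_{E_\delta}|f_n|^2\to 0$ as $n\to\infty$.

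The decisive step is the bad set, and this is where $p>2$ is genuinely used. Applying Hölder's inequality with the conjugate pair $p/2>1$ and $(p/2)'=p/(p-2)$ yields
\[
\int_{(0,T)\setminus E_\delta}|f_n|^2\le\Big(\int_0^T|f_n|^p\Big)^{2/p}\,\big|(0,T)\setminus E_\delta\big|^{\,1-2/p}\le M^2\,\delta^{\,1-2/p}.
\]
Since $p>2$, the exponent $1-2/p$ is strictly positive, so this bound is uniform in $n$ and tends to $0$ as $\delta\to 0$. Had we only $p=2$ the factor would be $\delta^0=1$ and the argument would collapse; this is the single place where the hypothesis is indispensable.

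Combining the two pieces: given $\eps>0$, first choose $\delta$ so that $M^2\delta^{\,1-2/p}<\eps/2$, and only then choose $N$ so that $\int_{E_\delta}|f_n|^2<\eps/2$ for $n\ge N$, which is possible because the uniform convergence is on the now-fixed set $E_\delta$. Then $\Vert f_n\Vert_{L^2(0,T)}^2<\eps$ for $n\ge N$, giving the claimed strong $L^2$ convergence. I expect no serious obstacle here; the only care required is to fix $\delta$ (hence $E_\delta$) \emph{before} letting $n\to\infty$, so as not to invert the order of the two limits. An equivalent route, should one prefer it, is Vitali's convergence theorem, the bound in $L^p$ with $p>2$ supplying the uniform integrability of $(|f_n|^2)$ directly.
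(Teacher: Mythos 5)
Your proof is correct and follows essentially the same route as the paper: Egorov's theorem to isolate a small ``bad'' set, uniform convergence on the complement, and H\"older's inequality with the $L^p$ bound (using $p>2$) to control the bad set. If anything, your exponent bookkeeping ($\delta^{1-2/p}$ via the conjugate pair $p/2$ and $p/(p-2)$) is more carefully stated than the paper's own display, which writes the H\"older step with garbled exponents.
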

\begin{proof}
Let $\varepsilon >0$. From Egorov theorem (\cite{Brezis1992}, p.75), the almost everywhere convergence implies that there exists $A_{\varepsilon}\subset [0,T]$ such that
\begin{equation*}
\begin{cases}
|[0,T]\backslash A_{\varepsilon}|<\varepsilon\\
f_n \to 0 \text{ uniformly on }A_{\varepsilon}
\end{cases}
\end{equation*}
which means
\begin{equation*}
\exists N\in\N, \forall n\geq N, \forall t\in A_{\varepsilon}, |f_n(t)|^2 < \varepsilon.
\end{equation*}
Therefore
\begin{equation*}
\displaystyle\int_{A_{\varepsilon}} (f_n(t))^2 dt \leq \varepsilon |A_{\varepsilon}| \leq \varepsilon T.
\end{equation*}
As from the bound in $L^p$ norm,
\begin{equation*}
\displaystyle\int_{[0,T]\backslash A_{\varepsilon}} (f_n(t))^2 dt \leq \displaystyle\int_{[0,T]\backslash A_{\varepsilon}} 1^q dt \displaystyle\int_{[0,T]} (f_n(t))^p dt \leq \varepsilon^q C
\end{equation*}
with $\frac{1}{q}+\frac{1}{p}=\frac{1}{2}$, we get
\begin{equation*}
\displaystyle\int_0^T (f_n(t))^2 dt \leq \varepsilon T + \varepsilon^q C
\end{equation*}
which proves the $L^2$ convergence.
\end{proof}
\begin{lemma}\label{stokes}
Let $u(t)\in H^1(\Omega_{s,\sigma}(t))$ such that $u_{|\partial\Omega_{s,\sigma}(t)}(t)=g(t)$ and $(w(t),p(t))\in H^1(\Omega\backslash\overline{\Omega_{s,\sigma}}(t))\times L^2(\Omega\backslash\overline{\Omega_{s,\sigma}}(t))$ solution of the Stokes problem 
\begin{equation*}
\begin{cases}
-\Delta w(t)+\nabla p(t)=0 &\text{on }\Omega\backslash\overline{\Omega_{s,\sigma}}(t),\\
\dv w(t)=0 &\text{on }\Omega\backslash\overline{\Omega_{s,\sigma}}(t),\\
w(t)=g(t) &\text{on }\partial\Omega_{s,\sigma}(t),\\
w(t)=0 &\text{on }\partial\Omega.
\end{cases}
\end{equation*}
Then there exists $\sigma_0>0$ and $C>0$, such that for all $\sigma<\sigma_0$, we have the following estimate:
\begin{equation*}
\qquad{\Vert w(t) \Vert}^2_{L^2(\Omega\backslash\overline{\Omega_{s,\sigma}}(t))} \leq C {\Vert u(t) \Vert}_{L^2(\Omega_{s,\sigma}(t))} {\Vert \nabla u(t) \Vert}_{L^2(\Omega_{s,\sigma}(t))}
\end{equation*}
\end{lemma}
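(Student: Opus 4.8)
The plan is to estimate $\|w\|_{L^2}$ by duality and then transfer the resulting boundary integral onto the fattened solid, where $u$ is controlled, introducing a free length scale that will generate the product on the right-hand side. I freeze $t$ and abbreviate $F=\Omega\setminus\overline{\Omega_{s,\sigma}}(t)$ and $\Gamma=\partial\Omega_{s,\sigma}(t)$. Since the solid is smooth and transported by a rigid velocity, I take the Stokes, trace and extension constants for the family $F$ and $\Omega_{s,\sigma}(t)$ to be uniform in $\sigma$ and $t$ (this uniformity is itself part of what must be checked; see below).

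First I would write $\|w\|_{L^2(F)}=\sup\{\int_F w\cdot\phi\,dx:\ \phi\in L^2(F),\ \|\phi\|_{L^2(F)}\le1\}$ and, for each competitor $\phi$, solve the adjoint Stokes problem $-\Delta\Phi+\nabla q=\phi$, $\dv\Phi=0$ in $F$, $\Phi=0$ on $\partial F$, for which elliptic regularity gives $\|\Phi\|_{H^2(F)}+\|q\|_{H^1(F)}\le C\|\phi\|_{L^2(F)}$. Pairing $\phi$ with $w$ and the homogeneous Stokes operator of $w$ with $\Phi$, and using $\dv w=\dv\Phi=0$, $\Phi=0$ on $\partial F$, $w=g$ on $\Gamma$ and $w=0$ on $\partial\Omega$, the interior terms cancel and only a boundary term survives: $\int_F w\cdot\phi\,dx=\int_\Gamma g\cdot T\,ds$, where $T=-\partial_n\Phi+q\,n$ is the traction of the adjoint field and $\|T\|_{L^2(\Gamma)}\le C\|\phi\|_{L^2(F)}$ by the trace theorem.

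Next I exploit that $g=u_{|\Gamma}$ while $\Phi$ vanishes on $\Gamma$, rewriting $\int_\Gamma u\cdot T\,ds$ as a volume integral so that the bare trace of $u$ never appears. For a free width $h>0$ I extend $(\Phi,q)$ across $\Gamma$ into a one-sided collar of thickness $h$ cut inside $\Omega_{s,\sigma}$, matching the Cauchy data $(\Phi,\partial_n\Phi,q)$ on $\Gamma$ and cutting off beyond the collar; in boundary-normal coordinates the natural profiles (of the type $\partial_n\Phi\,(s)\,\zeta(s/h)$ and $q\,\zeta(s/h)$) give the scalings $\|\nabla\tilde\Phi\|_{L^2}+\|\tilde q\|_{L^2}\le C h^{1/2}\|T\|_{L^2(\Gamma)}$ and $\|{-}\Delta\tilde\Phi+\nabla\tilde q\|_{L^2}\le C h^{-1/2}\|T\|_{L^2(\Gamma)}$, which is the quantitative multiplicative trace inequality. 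Setting $\mathbb T=-\nabla\tilde\Phi+\tilde q\,\I$, so that $\mathbb T$ has traction $T$ on $\Gamma$ and $\dv\mathbb T=-\Delta\tilde\Phi+\nabla\tilde q$, the divergence theorem on $\Omega_{s,\sigma}$ (whose only boundary is $\Gamma$) gives $\int_\Gamma u\cdot T\,ds=\int_{\Omega_{s,\sigma}}(\nabla u:\mathbb T+u\cdot\dv\mathbb T)\,dx$. Hence $|\int_F w\cdot\phi\,dx|\le C(h^{1/2}\|\nabla u\|_{L^2(\Omega_{s,\sigma})}+h^{-1/2}\|u\|_{L^2(\Omega_{s,\sigma})})\|\phi\|_{L^2(F)}$, and choosing $h=\|u\|_{L^2(\Omega_{s,\sigma})}/\|\nabla u\|_{L^2(\Omega_{s,\sigma})}$ to balance the two terms yields, after taking the supremum over $\phi$, the announced bound $\|w\|_{L^2(F)}^2\le C\,\|u\|_{L^2(\Omega_{s,\sigma})}\|\nabla u\|_{L^2(\Omega_{s,\sigma})}$.

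The step I expect to be the main obstacle is the legitimacy of the balancing choice of $h$ together with the uniformity of all constants. The optimal width must be admissible, i.e. the collar of thickness $h=\|u\|_{L^2}/\|\nabla u\|_{L^2}$ must fit inside $\Omega_{s,\sigma}$; because the collar is carved into $\Omega_{s,\sigma}\supset\Omega_s$ a $\sigma$-independent thickness of order the inradius of the solid is available, so admissibility reduces to a Poincar\'e-type control $\|u\|_{L^2(\Omega_{s,\sigma})}\le C\|\nabla u\|_{L^2(\Omega_{s,\sigma})}$ for the fields to which the estimate is applied (the deviation-from-rigid velocities of Section \ref{sec4}). It is exactly this point that removes any residual $h^{-1/2}\|u\|_{L^2}$ (equivalently any $\sigma^{-1/2}$ loss from forcing $h=\sigma$) and produces the pure product. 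The remaining care is to verify that the Stokes regularity estimate, the trace estimate for $T$, and the collar extension bounds all hold with constants independent of $\sigma$ and $t$ on the degenerating family $F$, $\Omega_{s,\sigma}(t)$, which is where the smoothness and rigid transport of the solid are used.
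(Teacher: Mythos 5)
Your argument follows the paper's proof step for step in its core: freeze $t$, write $F=\Omega\backslash\overline{\Omega_{s,\sigma}}(t)$, $\Gamma=\partial\Omega_{s,\sigma}(t)$, dualize $\Vert w\Vert_{L^2(F)}$ against $\phi$, solve the adjoint Stokes problem on $F$ with the Agmon--Douglis--Nirenberg estimate $\Vert \Phi\Vert_{H^2(F)}+\Vert q\Vert_{H^1(F)}\le C\Vert \phi\Vert_{L^2(F)}$ (constants uniform in $t$ and $\sigma<\sigma_0$ by rigidity of the motion), and integrate by parts so that only the boundary term $\int_\Gamma g\cdot T\,ds$ survives, with $\Vert T\Vert_{L^2(\Gamma)}\le C\Vert\phi\Vert_{L^2(F)}$ by trace. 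The only place you deviate is the estimate of that boundary term: the paper bounds it by $\Vert g\Vert_{L^2(\Gamma)}\Vert T\Vert_{L^2(\Gamma)}$ and invokes the multiplicative trace inequality $\Vert u\Vert^2_{L^2(\Gamma)}\le C\Vert u\Vert_{L^2(\Omega_{s,\sigma}(t))}\Vert\nabla u\Vert_{L^2(\Omega_{s,\sigma}(t))}$, whereas you re-derive precisely this inequality in dual form, via a collar extension of $T$ of width $h$ and optimization in $h$. So this is essentially the same proof, with the final trace lemma proved rather than quoted.

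Two caveats on your version of that last step. First, the scaling $\Vert{-}\Delta\tilde\Phi+\nabla\tilde q\Vert_{L^2}\le Ch^{-1/2}\Vert T\Vert_{L^2(\Gamma)}$ is not true as written: $\dv\mathbb{T}$ contains tangential derivatives of $T\circ\pi$ (and of $n\circ\pi$), and $T$ is only in $H^{1/2}(\Gamma)$, so these terms are not in $L^2$, let alone bounded by $\Vert T\Vert_{L^2(\Gamma)}$. This is repairable --- mollify $T$ tangentially at scale $h$ before extending (the mollification error is $O(h^{1/2}\Vert T\Vert_{L^2(\Gamma)})$ in $H^{-1/2}(\Gamma)$ and pairs with the $H^{1/2}$ trace of $u$), or integrate the tangential part by parts along the level surfaces of the distance function, at the admissible cost $Ch^{1/2}\Vert\nabla u\Vert_{L^2}\Vert T\Vert_{L^2(\Gamma)}$ --- but some such device is required; the normal-profile scaling alone does not give the divergence bound. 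Second, your admissibility worry about the choice $h=\Vert u\Vert_{L^2}/\Vert\nabla u\Vert_{L^2}$ is genuine and cannot be dismissed: the lemma as stated is actually false for general $u\in H^1(\Omega_{s,\sigma}(t))$ (take $u$ a nonzero constant: the right-hand side vanishes while $w\neq 0$). Note, however, that this is not specific to your route --- the gradient-only multiplicative trace inequality used in the paper's proof fails for constants in exactly the same way. What both arguments really prove is $\Vert w\Vert^2_{L^2(F)}\le C\Vert u\Vert_{L^2}\left(\Vert u\Vert_{L^2}+\Vert\nabla u\Vert_{L^2}\right)$, i.e.\ your bound with $h$ capped at a fixed collar width, and the stated product form then needs a Poincar\'e inequality for $u$. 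Be careful when you claim the applications supply it: that is correct for Lemma \ref{convergence_Pu_u}, where $e_\sigma$ vanishes on $\Omega_s(t)$, but not for Lemma \ref{convergence_Pueta_ueta}, where $e_{\eta\sigma}=u_{\eta,s}-u_\eta$ does not vanish on the solid and is only small there through (\ref{hu_bornee_1}); in that application one should either use a Poincar\'e inequality with the extra term $\Vert e_{\eta\sigma}\Vert_{L^2(\Omega^\eta_s(t))}$, or simply the additive form of the estimate above, which suffices in both applications because the extra $\Vert e\Vert^2_{L^2}$ contributions vanish in the relevant limits.
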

The proof of this result is postponed to the appendix.
\begin{lemma}\label{u_limite_rigide}
The limits $H$, $u$ and $u_s$ defined in (\ref{h_cv_2}), (\ref{u_cv_1}) and (\ref{us_cv_1}) verify
\begin{equation}
Hu=Hu_s.
\end{equation}
\end{lemma}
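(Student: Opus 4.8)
The whole point is to pass to the limit in the strong convergence (\ref{u_cv_3}), which encodes the penalization constraint: the estimate (\ref{hu_bornee_1}) gives $\He\ue-\He\ues\to 0$ in $L^2(Q)$. The plan is therefore to fix an arbitrary test function $\phi\in\mathcal{C}^\infty_c(Q)^3$, multiply (\ref{u_cv_3}) by $\phi$ and integrate, so that $\int_Q \He(\ue-\ues)\cdot\phi\,dx\,dt\to 0$, and then to identify separately the limits of the two products $\He\ue$ and $\He\ues$ tested against $\phi$. The subtlety is that $\ue$ and $\ues$ are known only to converge weakly, so one cannot pass to the limit in these products naively; the identification genuinely rests on the \emph{strong} convergence of $\He$ coming from the Lions--DiPerna theory (\ref{h_cv_2}).

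For the first product I would write $\int_Q\He\ue\cdot\phi\,dx\,dt=\int_Q\ue\cdot(\He\phi)\,dx\,dt$. Since $\He\to H$ strongly in $\mathcal{C}(0,T;L^q(\om))$ for every finite $q$ by (\ref{h_cv_2}) and $\phi$ is bounded, $\He\phi\to H\phi$ strongly in $L^2(Q)$; combined with the weak convergence $\ue\rightharpoonup u$ from (\ref{u_cv_1}), which in particular holds weakly in $L^2(Q)$, the standard weak--strong pairing lemma yields $\int_Q\ue\cdot(\He\phi)\,dx\,dt\to\int_Q u\cdot(H\phi)\,dx\,dt=\int_Q Hu\cdot\phi\,dx\,dt$. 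For the second product I would again write $\int_Q\He\ues\cdot\phi=\int_Q\ues\cdot(\He\phi)$ and split $\ues\cdot(\He\phi)-u_s\cdot(H\phi)=\ues\cdot(\He\phi-H\phi)+(\ues-u_s)\cdot(H\phi)$. The first term is controlled by $\|\ues\|_{L^\infty(Q)}\,\|\He\phi-H\phi\|_{L^1(Q)}\to 0$, using the uniform $L^\infty$ bound on $\ues$ established in the previous subsection together with the strong $L^1$ convergence of $\He$; the second term tends to $0$ because $\ues\rightharpoonup u_s$ weak* in $L^\infty(Q)$ by (\ref{us_cv_1}) while $H\phi\in L^1(Q)$. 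Hence $\int_Q\He\ues\cdot\phi\,dx\,dt\to\int_Q Hu_s\cdot\phi\,dx\,dt$.

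Putting the two limits together gives $\int_Q\He(\ue-\ues)\cdot\phi\,dx\,dt\to\int_Q H(u-u_s)\cdot\phi\,dx\,dt$, whereas the left-hand side tends to $0$ by (\ref{u_cv_3}); since $\phi\in\mathcal{C}^\infty_c(Q)^3$ is arbitrary, $H(u-u_s)=0$ in $\mathcal{D}'(Q)$ and therefore almost everywhere, which is exactly $Hu=Hu_s$. I do not expect a deep difficulty here: the only point requiring care is the matching of integrability in the two weak--strong products, which is automatic once one invokes the strong convergence of $\He$ together with the bounds $\ue\in L^2(0,T;H^1_0(\om))$ (so $\ue\in L^2(0,T;L^6(\om))$ in dimension three) and $\ues\in L^\infty(Q)$. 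The genuinely essential ingredient is the strong $L^2$ convergence (\ref{u_cv_3}) itself, which is precisely where the vanishing penalization parameter $\eta\to 0$ enforces the rigidity constraint in the limit.
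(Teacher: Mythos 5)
Your proof is correct and follows essentially the same route as the paper: identify the limits of $\He\ue$ and $\He\ues$ through weak--strong pairings that rest on the strong DiPerna--Lions convergence of $\He$, the weak convergence of $\ue$, and the uniform $L^\infty$ bound and weak* convergence of $\ues$, then conclude by uniqueness of limits since $\He(\ue-\ues)\to 0$ strongly in $L^2(Q)$. The only cosmetic difference is that you test against $\mathcal{C}^\infty_c(Q)^3$ functions and conclude in $\mathcal{D}'(Q)$, whereas the paper tests against functions in $L^q(0,T;L^r(\Omega))$ with dual exponents and identifies the weak limits in the corresponding Lebesgue spaces.
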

\begin{proof}
First we claim that 
\begin{equation}\label{cv1}
H_{\eta}u_{\eta} \rightharpoonup Hu \text{ in } L^q(0,T,L^r(\Omega)) \text{ weak, with }q<2 \text{ and } r<6
\end{equation}
Indeed, let us introduce $v\in L^q(0,T,L^r(\Omega))$ with $q>2$ and $r>\frac{6}{5}$. We have
\begin{equation*}
\displaystyle\int_0^T\int_{\Omega}(H_{\eta}u_{\eta}-Hu)\cdot v\, dx dt=\displaystyle\int_0^T\int_{\Omega}H(u_{\eta}-u)\cdot v\, dx dt+\displaystyle\int_0^T\int_{\Omega}(H_{\eta}-H)u_{\eta}\cdot v\, dx dt
\end{equation*}
From the injection of $H^1$ into $L^6$ in dimension less or equal to 3 and (\ref{u_cv_1}) we get
\begin{equation*}
u_{\eta} \rightharpoonup u \text{ in } L^2(0,T,L^6(\Omega)) \text{ weak,}
\end{equation*}
and, since $H$ is bounded in $L^{\infty}(Q)$, 
\begin{equation*}
\displaystyle\lim_{\eta\to 0}\int_0^T\int_{\Omega}H(u_{\eta}-u)\cdot v\, dx \,dt =0.
\end{equation*}
Moreover from (\ref{h_cv_2}) we easily get
\begin{equation*}
\displaystyle\lim_{\eta\to 0}\int_0^T\int_{\Omega}(H_{\eta}-H)u_{\eta}\cdot v\, dx \,dt =0.
\end{equation*}
We next show that 
\begin{equation}\label{cv2}
H_{\eta}u_{\eta,s} \rightharpoonup Hu_s \text{ in } L^p(0,T,L^p(\Omega)) \text{ weak, }\forall p\in[1,+\infty[.
\end{equation}
Let $v\in L^p(0,T,L^p(\Omega))$ with $p>1$. We write
\begin{equation*}
\displaystyle\int_0^T\int_{\Omega}(H_{\eta}u_{\eta,s}-Hu_s)\cdot v\, dx dt=\displaystyle\int_0^T\int_{\Omega}H(u_{\eta,s}-u_s)\cdot v\, dx dt+\displaystyle\int_0^T\int_{\Omega}(H_{\eta}-H)u_{\eta,s}\cdot v\, dx \,dt.
\end{equation*}
As $H$ is bounded in $L^{\infty}(Q)$, with (\ref{us_cv_1}) we get:
\begin{equation*}
\displaystyle\lim_{\eta\to 0}\int_0^T\int_{\Omega}H(u_{\eta,s}-u_s)\cdot v\, dx \,dt =0
\end{equation*}
In addition, from (\ref{h_cv_2}) we  get
\begin{equation*}
\displaystyle\lim_{\eta\to 0}\int_0^T\int_{\Omega}(H_{\eta}-H)u_{\eta,s}\cdot v\, dx \,dt =0.
\end{equation*}
By (\ref{cv1}) and (\ref{cv2}) we thus deduce 
\begin{equation}\label{cv3}
H_{\eta}u_{\eta}-H_{\eta}u_{\eta,s} \rightharpoonup Hu-Hu_s \text{ in } L^q(0,T,L^r(\Omega)) \text{ weak, with }q<2 \text{ and } r<6
\end{equation}
Finally we recall that 
\begin{equation}\label{cv4}
H_{\eta}u_{\eta} - H_{\eta}u_{\eta,s} \to 0 \text{ in } L^2(Q) \text{ strong. }
and the desired result follows
\end{equation}
The result is finally obtained by identifying the limits in (\ref{cv3}) et (\ref{cv4}).\\
\end{proof}
\begin{lemma}\label{inclusion_omega_s}
\begin{equation}\label{distome}
\forall\sigma>0, \quad \exists\eta_0>0, \quad \forall\eta<\eta_0, \quad \forall t\in[0,T],\quad\oes\subset\Omega_{s,\sigma}(t)\textrm{ and }\Omega_{s}(t)\subset\Omega_{s,\sigma}^\eta(t).
\end{equation}
\end{lemma}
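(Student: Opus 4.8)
The plan is to deduce both inclusions from the strong convergence $H_\eta\to H$ in $C(0,T;L^1(\Omega))$ already obtained in (\ref{h_cv_2}), upgraded to a Hausdorff-type statement by a purely geometric argument that exploits the rigidity and smoothness of the solid. First I would record the measure-theoretic content of (\ref{h_cv_2}). Since both $H_\eta(\cdot,t)$ and $H(\cdot,t)$ take values in $\{0,1\}$ (by (\ref{bornerhoH}), a bound stable under the passage to the limit), for every $t$ one has
$$\|H_\eta(\cdot,t)-H(\cdot,t)\|_{L^1(\Omega)}=\left|\,\oes\,\triangle\,\os\,\right|,$$
the measure of the symmetric difference. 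Hence (\ref{h_cv_2}) taken at $p=1$ yields $\ds\sup_{t\in[0,T]}\left|\oes\triangle\os\right|\to 0$ as $\eta\to 0$.

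Next I would use the rigid structure of the two phases. By construction $\oes$ is the image of the reference domain $\Omega_s^0$ under the rigid flow of $\ues$; and since the limit $H$ solves the transport equation with the rigid field $u_s=u_G+\omega\times r$ (as noted after (\ref{h_cv_2})), the set $\os$ is likewise a rigid image of $\Omega_s^0$. Here I use the canonical smooth representatives of the level sets that the transport structure provides, so that distances and inclusions are unambiguous. Because rigid motions are isometries, all of these sets satisfy one and the same uniform interior cone condition inherited from the smooth bounded domain $\Omega_s^0$: there are constants $h_0>0$ and $\theta_0\in(0,\pi/2)$, depending only on $\Omega_s^0$, such that through every point of the closure there passes an interior cone of aperture $\theta_0$ and height $h_0$ contained in the set.

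I would then argue the first inclusion by contradiction. If it failed there would exist $\sigma>0$, a sequence $\eta_k\to 0$, times $t_k\in[0,T]$ and points $x_k\in\Omega^\eta_{s}(t_k)$ with $d(x_k,\Omega_s(t_k))\ge\sigma$. Truncating the interior cone at $x_k$ to height $h'=\min(h_0,\sigma/2)$ produces a cone $C_k\subset\Omega^\eta_{s}(t_k)$ of fixed positive measure $m(\sigma)>0$ depending only on $\sigma,\theta_0,h_0$; since $C_k\subset B(x_k,\sigma/2)$ while $d(x_k,\Omega_s(t_k))\ge\sigma$, the cone $C_k$ is disjoint from $\Omega_s(t_k)$. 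Thus $\left|\Omega^\eta_{s}(t_k)\triangle\Omega_s(t_k)\right|\ge m(\sigma)>0$ for all $k$, contradicting the uniform convergence of the symmetric difference to $0$. This forces, for $\eta$ small enough and all $t$, the inclusion $\oes\subset\Omega_{s,\sigma}(t)$. The second inclusion $\Omega_s(t)\subset\Omega_{s,\sigma}^\eta(t)$ follows by the identical argument with the roles of $\oes$ and $\os$ interchanged, invoking now the cone condition for $\os$.

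The main obstacle is precisely the passage from $L^1$-closeness to the geometric inclusions: the smallness of the symmetric difference controls neither the boundaries nor thin protrusions, so one must genuinely use the uniform regularity of the moving solid, here guaranteed by rigidity together with the smoothness of $\Omega_s^0$. The delicate point in making the contradiction work is that the cone (or thickness) constant $m(\sigma)$ must be uniform in both $\eta$ and $t$; this is exactly what the isometry-invariance of the interior cone condition delivers, since every $\oes(t)$ and $\os(t)$ is a congruent copy of the single fixed domain $\Omega_s^0$.
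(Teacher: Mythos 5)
Your proof is correct and follows essentially the same route as the paper: both reduce the $C(0,T;L^1(\Omega))$ convergence of $H_\eta$ to uniform smallness of the symmetric difference $|\Omega_s^\eta(t)\triangle\Omega_s(t)|$, and then argue by contradiction using a uniform interior regularity property (an interior cone in your version, an interior ball of radius independent of $\eta$ and $t$ in the paper's) that rigidity transports unchanged from the smooth initial domain $\Omega_s^0$. Your write-up is in fact slightly more careful than the paper's, which treats only the failure of the first inclusion explicitly, whereas you spell out the symmetric argument for $\Omega_s(t)\subset\Omega_{s,\sigma}^\eta(t)$ using that $H$ is transported by the rigid field $u_s$.
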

\begin{proof}
From (\ref{h_cv_2}) with $p=1$ we have
\begin{equation}
\forall\eps>0, \quad \exists\eta_0>0, \quad \forall\eta<\eta_0, \quad \forall t\in[0,T], \quad\int_\Omega|\He(x,t)-H(x,t)|\, dx<\eps,
\end{equation}
which means
\begin{equation}\label{mesinfeps}
\forall\eps>0, \quad \exists\eta_0>0, \quad \forall\eta<\eta_0, \quad \forall t\in[0,T], \quad|\oes\setminus\os|+|\os\setminus\oes|<\eps.
\end{equation}
By contradiction we suppose that we can find $\sigma_0>0$ such as $\forall\eta_0>0$, there exists $\eta<\eta_0$ and $t\in[0,T]$ for which at least one of the inclusions of (\ref{distome}) is false. This means that we can find  
$x_\eta(t)\in\Omega^{\eta}_s(t)$ such as $d(x_\eta(t),\Omega_s(t))>\sigma_0$. 
$\Omega^{\eta}_s(t)$ is a rigid deformation of $\Omega^{\eta}_s(0)$ so its boundary is
$\mathcal{C}^2$. Thus, there exists a sufficiently small $\rho$ independent of $\eta$, such as
for each point of $\Omega^{\eta}_s(t)$ there exists a ball 
of radius $\rho>0$ containing this point and included in $\Omega^{\eta}_s(t)$. Then there exists 
also a ball of radius $\bar{\rho}:=\min(\rho,\sigma_0/3)$ containing the point and included in
$\Omega^{\eta}_s(t)$. This latter ball is  included in 
$\Omega^{\eta}_s(t)\setminus\Omega_s(t)$. Indeed it contains a point at distance more than $\sigma_0$ from  $\Omega_s(t)$ and its diameter is less than $2\sigma_0/3$. 
We thus got that $$\exists\sigma_0,\;\forall\eta_0>0,\;\exists\eta>0,\;\exists t\in[0,T],\;
|\Omega^{\eta}_s(t)\setminus\Omega_s(t)|>\pi\bar{\rho}^2$$
with $\bar{\rho}$ independent of $\eta$ and $t$. This contradicts (\ref{mesinfeps}).
\end{proof}
\begin{lemma}\label{convergence_Pu_u}
\begin{equation}
\displaystyle\lim_{\sigma\to 0} \Vert P^r_{\sigma}(u)-u \Vert_{L^2(0,T,\mathcal{V}^r)}=0, \text{ } \forall r\in[0,1[
\end{equation}
\end{lemma}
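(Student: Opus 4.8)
The plan is to prove the convergence pointwise in $t$ and then integrate against a fixed $L^1$ majorant. Since $P^r_\sigma(t)$ is the orthogonal projection of $\mathcal{V}^r$ onto the closed subspace $\mathcal{K}^r_\sigma(t)$ and $0\in\mathcal{K}^r_\sigma(t)$, we have $\Vert P^r_{\sigma}(u)(t)-u(t)\Vert_{\mathcal{V}^r}=\operatorname{dist}_{\mathcal{V}^r}(u(t),\mathcal{K}^r_\sigma(t))\le\Vert u(t)\Vert_{\mathcal{V}^r}$ for every $\sigma$. Because $u\in L^2(0,T;\mathcal{V})\subset L^2(0,T;\mathcal{V}^r)$, the integrand is dominated by $\Vert u(t)\Vert_{\mathcal{V}^r}^2\in L^1(0,T)$, so by dominated convergence it suffices to show that, for a.e.\ fixed $t$, $\operatorname{dist}_{\mathcal{V}^r}(u(t),\mathcal{K}^r_\sigma(t))\to0$ as $\sigma\to0$. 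Note that by Lemma \ref{u_limite_rigide} we have $Hu=Hu_s$, so $u(t)$ is rigid on $\os$ (that is $D(u(t))=0$ there); fix such a $t$.

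For the pointwise statement it is enough to exhibit, for each small $\sigma$, a field $v_\sigma\in\mathcal{K}^r_\sigma(t)$ with $\Vert v_\sigma-u(t)\Vert_{\mathcal{V}^r}\to0$. Write $u(t)=V_u+\omega_u\times r$ on $\os$ and let $R(x)=V_u+\omega_u\times(x-\xg)$ be the associated global rigid, hence divergence free, field. Since the solid stays at distance at least $d_0/2$ from $\dom$ on $[0,T]$, I fix once and for all a divergence free field $\tilde R\in\mathcal{V}$ that coincides with $R$ on a fixed neighborhood of $\osb$ and vanishes near $\dom$ (its construction is a cutoff of $R$ corrected by a fixed, $\sigma$-independent Bogovskii term on the region where the cutoff varies). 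Then $w:=u(t)-\tilde R\in\mathcal{V}$ is divergence free, lies in $H^1_0(\Omega)$ and vanishes on $\os$. The problem is thereby reduced to approximating in $H^r$ a divergence free field $w$ vanishing on $\os$ by divergence free fields vanishing on the whole neighborhood $\Omega_{s,\sigma}(t)$: if $w_\sigma$ is such a field, then $v_\sigma:=\tilde R+w_\sigma$ belongs to $\mathcal{K}^r_\sigma(t)$ (it is divergence free, vanishes on $\dom$, and equals the rigid $R$ on $\Omega_{s,\sigma}(t)$ for $\sigma$ small) and $\Vert v_\sigma-u(t)\Vert_{\mathcal{V}^r}=\Vert w_\sigma-w\Vert_{\mathcal{V}^r}$.

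To build $w_\sigma$ I would take a cutoff $\theta_\sigma$ equal to $0$ on $\Omega_{s,\sigma}(t)$, equal to $1$ outside $\Omega_{s,2\sigma}(t)$, with $|\nabla\theta_\sigma|\le C/\sigma$, form $\theta_\sigma w$, and restore the divergence free constraint by subtracting a corrector $B_\sigma$ supported in the shell $\Omega_{s,2\sigma}(t)\setminus\Omega_{s,\sigma}(t)$ solving $\div B_\sigma=\nabla\theta_\sigma\cdot w$ with zero boundary values; the compatibility $\int\nabla\theta_\sigma\cdot w=0$ holds because $\div w=0$. The two key estimates are that $\Vert\theta_\sigma w-w\Vert_{L^2}\to0$ (the difference lives on the shrinking region $\Omega_{s,2\sigma}(t)\setminus\os$, where $w\in L^2$), whereas $\Vert\theta_\sigma w-w\Vert_{H^1}$ stays only bounded: the dangerous term $\sigma^{-2}\int_{\mathrm{shell}}|w|^2$ is controlled by $\Vert\nabla w\Vert_{L^2}^2$ through a Hardy inequality near $\partial\os$, using $w=0$ on $\os$ and $d(x,\os)\le2\sigma$ on the shell. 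Granting the analogous bounds $\Vert B_\sigma\Vert_{L^2}\to0$ (from $B_\sigma=0$ on a shell of thickness $\sim\sigma$ and Poincar\'e) and $\Vert B_\sigma\Vert_{H^1}\le C$, the field $w_\sigma:=\theta_\sigma w-B_\sigma$ satisfies $\Vert w_\sigma-w\Vert_{L^2}\to0$ and $\Vert w_\sigma-w\Vert_{H^1}\le C$. Interpolation between $L^2$ and $H^1$ then yields, for every $r\in[0,1[$, $\Vert w_\sigma-w\Vert_{H^r}\le C\Vert w_\sigma-w\Vert_{L^2}^{1-r}\Vert w_\sigma-w\Vert_{H^1}^{r}\to0$, which is exactly where the strict inequality $r<1$ is essential.

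The main obstacle I anticipate is the divergence free correction: producing $B_\sigma$ with an $H^1$ bound uniform in $\sigma$ is delicate, because the corrector is supported on a shell of thickness $\sim\sigma$, on which the Bogovskii constant degenerates in general. One must exploit the special form $\div B_\sigma=\nabla\theta_\sigma\cdot w$ together with the Hardy estimate, or else avoid the corrector entirely by replacing the cutoff by a volume preserving, near-identity diffeomorphism pushing $\Omega_{s,\sigma}(t)$ into $\os$ and taking the Piola transform of $w$, which enforces $\div w_\sigma=0$ automatically while keeping $\Vert w_\sigma-w\Vert_{L^2}\to0$ and $\Vert w_\sigma-w\Vert_{H^1}$ bounded. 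Everything else, namely the domination argument, the reduction via $\tilde R$, and the final interpolation, is routine once these uniform shell estimates are secured.
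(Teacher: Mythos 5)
Your overall skeleton is sound and in outline parallels the paper's: reduce to pointwise-in-$t$ convergence, build a competitor in $\mathcal{K}^r_{\sigma}(t)$ that is close to $u(t)$ in $L^2$ and merely bounded in $H^1$, and conclude by $L^2$--$H^1$ interpolation, which is exactly where $r<1$ enters. Your dominated-convergence reduction (using $\Vert P^r_{\sigma}(u)(t)-u(t)\Vert_{\mathcal{V}^r}\le\Vert u(t)\Vert_{\mathcal{V}^r}\in L^2(0,T)$) is a legitimate and even simpler substitute for the paper's Lemma \ref{egorov}; the reduction via $\tilde R$ and the Hardy-inequality control of the cutoff term $\nabla\theta_\sigma\otimes w$ are also correct.

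However, the construction of the competitor has a genuine gap, and it is the one you flag yourself: the corrector $B_\sigma$. The constant in the divergence equation (Bogovskii/inf-sup) on the shell $\Omega_{s,2\sigma}(t)\setminus\Omega_{s,\sigma}(t)$, of thickness $\sim\sigma$, is \emph{not} uniform in $\sigma$; for such thin domains it degenerates like $\sigma^{-1}$ (the classical Babu\v{s}ka--Aziz degeneration). Your Hardy estimate only gives $\Vert\nabla\theta_\sigma\cdot w\Vert_{L^2(\mathrm{shell})}\le C$, i.e.\ boundedness of the datum, not smallness, so the available conclusion is $\Vert\nabla B_\sigma\Vert_{L^2}\le C(\sigma)\,C$ with $C(\sigma)\to\infty$; both of your claimed bounds ($\Vert B_\sigma\Vert_{H^1}\le C$, and $\Vert B_\sigma\Vert_{L^2}\to 0$ via Poincar\'e on the thin shell) rest precisely on the uniform bound you cannot invoke, so ``granting the analogous bounds'' assumes the crux. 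Your fallback is also flawed as stated: a volume-preserving diffeomorphism cannot push $\Omega_{s,\sigma}(t)$ into $\os$, since $|\Omega_{s,\sigma}(t)|>|\os|$; a Piola transform under a non-volume-preserving near-identity dilation (in the spirit of \cite{SanMarStaTuc02}) could be made to work, but you have not constructed it. The paper sidesteps this difficulty entirely: its competitor $v_\sigma$ is the solution of a Stokes problem on $\Omega\backslash\overline{\Omega_{s,\sigma}}(t)$ with boundary data $u_s(t)$, extended by $u_s(t)$ inside, so incompressibility and rigidity on $\Omega_{s,\sigma}(t)$ hold by construction with no corrector needed; the $L^2$ smallness of $e_\sigma=v_\sigma-u$ then follows from the duality estimate of Lemma \ref{stokes} (proved by Agmon--Douglis--Nirenberg regularity) combined with the Fujita--Sauer boundary-layer estimate on the $\sigma$-shell, after which the argument finishes with the same interpolation and time-integration steps you propose.
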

The proof of this result is postponed to the appendix.

The next lemma is essentially, which is also proved in the appendix, is a rephrasing of the previous one with  $u_\eta$ instead of $u$. The difference is that we do not have anymore $\ue-\ues=0$ ins $\oes$, but we do have an estimate on it, from (\ref{hu_bornee_1}), which allows to pass to the limit as $\eta$ goes to $0$.
\begin{lemma}\label{convergence_Pueta_ueta}
\begin{equation}
\displaystyle\lim_{\sigma\to 0} \displaystyle\lim_{\eta\to 0} \Vert P^r_{\sigma}(u_{\eta})-u_{\eta} \Vert_{L^2(0,T,\mathcal{V}^r)}=0, \text{ } \forall r\in[0,1[
\end{equation}
\end{lemma}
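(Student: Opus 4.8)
The plan is to bound the projection error by the distance to a single, explicitly constructed competitor. Since $P^r_\sigma(t)$ is the orthogonal projection of $\mathcal{V}^r$ onto the closed subspace $\mathcal{K}^r_\sigma(t)$, for any admissible $v_\eta(t)\in\mathcal{K}^r_\sigma(t)$ one has $\Vert P^r_\sigma(\ue)-\ue\Vert_{\mathcal{V}^r}\le\Vert\ue-v_\eta\Vert_{\mathcal{V}^r}$, so it suffices to build, for a.e. $t$, a field rigid on $\Omega_{s,\sigma}(t)$ and close to $\ue$. I would take $v_\eta=\ues$ on $\Omega_{s,\sigma}(t)$ (genuinely rigid there, so $D(v_\eta)=0$) and on $\Omega\setminus\overline{\Omega_{s,\sigma}}(t)$ set $v_\eta=\ue-w_\eta$, where $w_\eta$ is the Stokes field of Lemma \ref{stokes} with boundary data $(\ue-\ues)_{|\partial\Omega_{s,\sigma}(t)}$ and $0$ on $\dom$; this data has zero net flux (both $\ue$ and the affine field $\ues$ are divergence free in $\Omega_{s,\sigma}(t)$), so it is admissible. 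Setting $w_\eta:=\ue-\ues$ inside and this Stokes solution outside, the matching of traces across $\partial\Omega_{s,\sigma}(t)$ makes $w_\eta$ continuous, divergence free and zero on $\dom$, whence $v_\eta=\ue-w_\eta\in\mathcal{K}^r_\sigma(t)$ and $\Vert P^r_\sigma(\ue)-\ue\Vert_{\mathcal{V}^r}\le\Vert w_\eta\Vert_{\mathcal{V}^r}$.

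Next I would estimate $w_\eta$ pointwise in time. Writing $a_\eta(t)=\Vert\ue-\ues\Vert_{L^2(\Omega_{s,\sigma}(t))}$ and $b_\eta(t)=\Vert\ue-\ues\Vert_{H^1(\Omega_{s,\sigma}(t))}$, the interior part of $w_\eta$ contributes $a_\eta^2$ in $L^2$ and $b_\eta^2$ in $H^1$; Lemma \ref{stokes} bounds the exterior part by $Ca_\eta b_\eta$ in $L^2$, and the trace estimate for the Stokes extension bounds it by $Cb_\eta^2$ in $H^1$, with constants uniform in $t\in[0,T]$ for $\sigma<\sigma_0$ since $\Omega_{s,\sigma}(t)$ is a shape-preserving rigid image of $\Omega_{s,\sigma}(0)$. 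Hence $\Vert w_\eta(t)\Vert_{L^2}^2\le Ca_\eta b_\eta$ and $\Vert w_\eta(t)\Vert_{H^1}^2\le Cb_\eta^2$, and interpolation $H^r=[L^2,H^1]_r$ gives $\Vert w_\eta(t)\Vert_{H^r}^2\le Ca_\eta^{1-r}b_\eta^{1+r}$. Integrating in time and applying Hölder with exponents $2/(1-r)$ and $2/(1+r)$ yields
\[
\Vert w_\eta\Vert_{L^2(0,T,\mathcal{V}^r)}^2\le C\Big(\int_0^T a_\eta^2\,dt\Big)^{\frac{1-r}{2}}\Big(\int_0^T b_\eta^2\,dt\Big)^{\frac{1+r}{2}},
\]
where $\int_0^T b_\eta^2\,dt\le C$ uniformly in $\eta,\sigma$ by (\ref{u_bornee_1}) and the $L^\infty(0,T;W^{1,\infty})$ bound on $\ues$.

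The crux is then $\lim_{\sigma\to0}\lim_{\eta\to0}\int_0^T a_\eta^2\,dt=0$, and this is where the two ingredients peculiar to the $\ue$ case enter. By Lemma \ref{inclusion_omega_s}, for $\eta$ small enough $\oes\subset\Omega_{s,\sigma}(t)$, so I split $\Omega_{s,\sigma}(t)=\oes\cup(\Omega_{s,\sigma}(t)\setminus\oes)$. On $\oes$ one has $\He=1$, and the penalization estimate (\ref{hu_bornee_1}) gives $\int_0^T\int_{\oes}|\ue-\ues|^2=\int_0^T\int_\Omega\He|\ue-\ues|^2\le C\eta\to0$. On the shell $\Omega_{s,\sigma}(t)\setminus\oes$, whose space--time measure $m_\eta=\int_0^T|\Omega_{s,\sigma}(t)\setminus\oes|\,dt$ tends, as $\eta\to0$ (using $\He\to H$ in $C(0,T;L^1)$), to $\int_0^T|\Omega_{s,\sigma}(t)\setminus\os|\,dt$ and then to $0$ as $\sigma\to0$, I estimate $|\ue-\ues|^2\le2|\ue|^2+2|\ues|^2$: the $\ues$ term is handled by its $L^\infty(Q)$ bound times $m_\eta$, and the $\ue$ term by Hölder together with the uniform bound of $\ue$ in $L^{10/3}(Q)$ (Ladyzhenskaya interpolation of (\ref{u_bornee_1}) and (\ref{u_bornee_2})), which gives $\int_0^T\int_{\mathrm{shell}}|\ue|^2\le Cm_\eta^{2/5}$. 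Letting $\eta\to0$ and then $\sigma\to0$ kills every contribution, so $\int_0^T a_\eta^2\to0$ and the lemma follows.

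I expect the main obstacle to be exactly this shell term: because strong $L^2(Q)$ convergence of $\ue$ is not yet available (it is the very conclusion Theorem \ref{cvresult} is driving at, and this lemma is one of its ingredients), the smallness on the shrinking shell cannot be obtained by passing to the limit in $\ue$; it must instead be extracted \emph{uniformly} in $\eta$ from an integrability margin, namely the Ladyzhenskaya $L^{10/3}(Q)$ bound, combined with the measure decay $m_\eta\to0$. This is precisely the new difficulty relative to Lemma \ref{convergence_Pu_u}, where $\ue$ is replaced by the limit $u$ which is \emph{exactly} rigid on $\os$; here the lack of exact rigidity is compensated by the penalization bound (\ref{hu_bornee_1}) on $\oes$ and by the shell argument off $\oes$. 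A secondary point needing care is that the constants in Lemma \ref{stokes}, in the trace estimate, and in the interpolation inequality are all independent of $t$, which is legitimate since $\Omega_{s,\sigma}(t)$ is a rigid image of $\Omega_{s,\sigma}(0)$ throughout $[0,T]$.
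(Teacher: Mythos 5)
Your proof is correct, and its skeleton is the same as the paper's: bound the projection error by the distance to a competitor built from an exterior Stokes problem with the rigid velocity as boundary data, control the part on $\oes$ by the penalization estimate (\ref{hu_bornee_1}), the exterior part by Lemma \ref{stokes}, and handle the intermediate shell using Lemma \ref{inclusion_omega_s}. You differ genuinely in two technical steps. First, on the shell $\Omega_{s,\sigma}(t)\setminus\oes$ the paper keeps the difference $\ue-\ues$ intact and invokes the thin-domain estimate of Fujita--Sauer together with a trace inequality on $\partial\oes$, obtaining after Cauchy--Schwarz in time a quantitative bound $C(\eta^{1/2}+\sigma^2)$; you instead split $|\ue-\ues|^2\le 2|\ue|^2+2|\ues|^2$ and play the uniform $L^{10/3}(Q)$ bound on $\ue$ (Ladyzhenskaya interpolation of (\ref{u_bornee_1})--(\ref{u_bornee_2})) against the smallness of the shell's space--time measure. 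Your route is more elementary (no thin-domain or trace machinery), at the price of a worse but irrelevant rate $\sigma^{2/5}$, and your diagnosis is exactly the right one: the integrability margin is what substitutes for the strong $L^2(Q)$ convergence that is not yet available. Second, to pass from the $L^2(\Omega)$ estimate to $L^2(0,T;\mathcal{V}^r)$ the paper extracts an a.e.-in-$t$ convergent subsequence and reruns the Egorov argument (Lemma \ref{egorov}) of Lemma \ref{convergence_Pu_u} --- a step it only sketches (``adapted in a straightforward way''), and which is somewhat delicate for a doubly indexed family --- whereas you interpolate pointwise in time, $\Vert w_\eta(t)\Vert^2_{\mathcal{V}^r}\le Ca_\eta^{1-r}b_\eta^{1+r}$, and conclude by a single H\"older inequality in $t$, with no extraction at all; this is cleaner for the iterated limit. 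A last, minor, difference: you pose the Stokes problem outside $\Omega_{s,\sigma}(t)$, so membership of the competitor in $\mathcal{K}^r_{\sigma}(t)$ is immediate (and you rightly check the zero-flux compatibility of the boundary data), while the paper poses it outside $\Omega^\eta_{s,\sigma}(t)$ and extends by $\ues$ there, which strictly speaking requires the inclusion $\Omega_{s,\sigma}(t)\subset\Omega^{\eta}_{s,\sigma}(t)$, i.e.\ a radii-adjusted variant of Lemma \ref{inclusion_omega_s}; your choice sidesteps this.
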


\begin{lemma}\label{convergence_uetaPueta_uPu}
\begin{equation}
\displaystyle\lim_{\sigma\to 0}\displaystyle\lim_{\eta\to 0} \Vert \rho_{\eta}u_{\eta}\cdot P^r_{\sigma}(u_{\eta})-\rho u\cdot P^r_{\sigma}(u) \Vert_{L^1([0,T]\times\Omega)}=0, \text{ } \forall r\in]0,1[.
\end{equation}
\end{lemma}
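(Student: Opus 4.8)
The plan is to reduce the statement to a single genuinely new ingredient --- the strong $L^2(Q)$ convergence of the rigid projections $P^r_\sigma(\ue)$ for fixed $\sigma$ --- and to absorb everything else into the already-established Lemmas \ref{convergence_Pu_u} and \ref{convergence_Pueta_ueta}. First I would insert the projections into the integrand and use the algebraic identity
\begin{multline*}
\re\ue\cdot P^r_\sigma(\ue)-\rho u\cdot P^r_\sigma(u)
=\re\bigl(\ue-P^r_\sigma(\ue)\bigr)\cdot P^r_\sigma(\ue)\\
-\rho\bigl(u-P^r_\sigma(u)\bigr)\cdot P^r_\sigma(u)
+\re|P^r_\sigma(\ue)|^2-\rho|P^r_\sigma(u)|^2 .
\end{multline*}
Taking the $L^1(Q)$ norm and using $\rho_{min}\le\re\le\max(\rs,\rf)$ together with the bounds (\ref{u_bornee_1})--(\ref{u_bornee_2}), the first two terms are dominated by $C\Vert\ue-P^r_\sigma(\ue)\Vert_{L^2(Q)}$ and $C\Vert u-P^r_\sigma(u)\Vert_{L^2(Q)}$, both of which vanish in the order $\lim_\sigma\lim_\eta$ by Lemmas \ref{convergence_Pueta_ueta} and \ref{convergence_Pu_u} respectively. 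Hence the whole claim follows once one proves, for each fixed $\sigma$,
\begin{equation*}
\bigl\|\re|P^r_\sigma(\ue)|^2-\rho|P^r_\sigma(u)|^2\bigr\|_{L^1(Q)}\to 0\quad\text{as }\eta\to0 .
\end{equation*}

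Since $\re\to\rho$ strongly in $C(0,T;L^q(\om))$ for every $q$ by (\ref{rho_cv_2}), while $|P^r_\sigma(\ue)|^2$ is bounded in $L^{1+\eps}(Q)$ for some $\eps>0$ (because $P^r_\sigma(\ue)$ is bounded in $L^2(0,T;\mathcal{V}^r)$ and $\mathcal{V}^r\hookrightarrow L^{6/(3-2r)}(\om)$ with $r>0$), the factor carrying $\re-\rho$ is negligible, and the display above reduces to the strong convergence
\begin{equation*}
P^r_\sigma(\ue)\to P^r_\sigma(u)\quad\text{in }L^2(Q)\ \text{as }\eta\to0,\ \sigma\text{ fixed} .
\end{equation*}
The weak convergence $P^r_\sigma(\ue)\rightharpoonup P^r_\sigma(u)$ in $L^2(0,T;\mathcal{V}^r)$ is immediate from (\ref{u_cv_1}), since $P^r_\sigma(t)$ is a fixed ($\eta$-independent, built on the limit set $\Omega_{s,\sigma}(t)$) bounded linear projector. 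The task is to upgrade this to strong convergence by an Aubin--Lions--Simon argument: the spatial factor is compact because $\mathcal{V}^r\hookrightarrow\hookrightarrow\mathcal{V}^0=L^2(\om)$ for $r>0$ (this is precisely why the statement is restricted to $r\in\,]0,1[$), so it remains to produce a uniform-in-$\eta$ equicontinuity in time of $t\mapsto\int_\om\re\ue\cdot\varphi\,dx$ for test fields $\varphi$ rigid on $\Omega_{s,\sigma}(t)$.

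The crucial and most delicate step is this time estimate, and it is where the penalization structure pays off. By Lemma \ref{inclusion_omega_s}, for $\eta$ small enough $\oes\subset\Omega_{s,\sigma}(t)$, so any $\varphi\in\mathcal{K}^r_\sigma(t)$ is rigid on $\oes$; Lemma \ref{propR1} then forces the dangerous $O(1/\eta)$ penalty term $\tfrac1\eta\int_\om\re\He(\ue-\ues)\cdot\varphi\,dx$ to vanish identically, while the pressure term disappears through $\dv\varphi=0$ and $\varphi_{|\dom}=0$. Testing (\ref{ns1penal}) against such $\varphi$ therefore leaves only the inertial, viscous and gravity contributions, all bounded uniformly in $\eta$ by (\ref{bornerhoH}), (\ref{u_bornee_1}) and (\ref{u_bornee_2}) (the trilinear term being controlled in three dimensions via $\ue\in L^2(0,T;H^1_0(\om))\cap L^\infty(0,T;L^2(\om))$). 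This yields a uniform bound on the time-increments of $\re\ue$ against the rigid test space, hence the required equicontinuity; combined with the spatial compactness and with $\re\to\rho$, Aubin--Lions gives $P^r_\sigma(\ue)\to P^r_\sigma(u)$ in $L^2(Q)$ and closes the argument. I expect the genuine obstacle to be the careful treatment of the time dependence of the moving projector $P^r_\sigma(t)$ (equivalently, of $\varphi$ through the moving set $\Omega_{s,\sigma}(t)$) when differentiating the pairing in $t$; this is handled using the $\mathcal{C}^2$-regularity of the transported interface and the $L^\infty$ bound on the rigid velocity established when $T$ was fixed.
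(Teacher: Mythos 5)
Your algebraic decomposition is fine, and the first two terms are indeed killed by Lemmas \ref{convergence_Pu_u} and \ref{convergence_Pueta_ueta}; the treatment of the factor $\rho_\eta-\rho$ via (\ref{rho_cv_2}) is also sound. The genuine gap is in the step you call crucial: you claim that testing (\ref{ns1penal}) against fields rigid on $\Omega_{s,\sigma}(t)$ yields "the required equicontinuity" and hence, by Aubin--Lions, the strong convergence $P^r_\sigma(u_\eta)\to P^r_\sigma(u)$ in $L^2(Q)$. What the momentum equation controls is the time derivative of the \emph{momentum} $\rho_\eta u_\eta$ paired with rigid test fields, i.e.\ a bound on $\frac{d}{dt}P^0(\rho_\eta u_\eta)$ in a dual norm (this is exactly (\ref{pu_bornee_1}) in the paper). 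It gives no bound whatsoever on time increments of $u_\eta$ or of $P^r_\sigma(u_\eta)$: to pass from the momentum to the velocity you would have to divide by $\rho_\eta$, and with $\rho_\eta$ merely in $L^\infty$ with a moving jump discontinuity, $\varphi/\rho_\eta$ is no longer an admissible (rigid, divergence-free) test field and the dual-space bound does not survive. This is the classical obstruction of variable-density Navier--Stokes, and it is precisely why compactness can be obtained for $P^0(\rho_\eta u_\eta)$ but not for $P^r_\sigma(u_\eta)$ itself. Worse, your decomposition funnels all the difficulty into the symmetric square term $\int_Q\rho_\eta|P^r_\sigma(u_\eta)|^2$, which, being quadratic in a quantity that is only \emph{weakly} convergent, genuinely requires the strong convergence you cannot produce; weak convergence alone can lose mass in such a term.

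The paper's proof is built to avoid ever claiming strong convergence of the projected velocity. It keeps the asymmetric cross term $\int\rho_\eta u_\eta\cdot P^r_\sigma(t)u_\eta$ and rewrites it, using the $L^2$-self-adjointness of $P^0_{\sigma/2}(k\tau)$ and the identity $P^0_{\sigma/2}(k\tau)P^r_\sigma(t)=P^r_\sigma(t)$ coming from the nested inclusions (\ref{l3_incl_1}), as a duality pairing $\langle P^0_{\sigma/2}(k\tau)(\rho_\eta u_\eta),\,P^r_\sigma(t)u_\eta\rangle_{(\mathcal{K}^r_{\sigma/2})^*,\mathcal{K}^r_{\sigma/2}}$. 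One factor (the projected momentum) converges \emph{strongly} in $L^2(I_k,(\mathcal{K}^r_{\sigma/2}(k\tau))^*)$ by Aubin--Simon, the other (the projected velocity) converges only \emph{weakly} in $L^2(I_k,\mathcal{K}^r_{\sigma/2}(k\tau))$, and strong$\times$weak passes to the limit. Note also that the paper's partition of $[0,T]$ into intervals $I_k$ with the projector frozen at time $k\tau$ is not a cosmetic device: Aubin--Simon needs fixed function spaces, and this slicing is how the moving-domain issue you acknowledge at the end of your proposal (but leave unresolved) is actually handled. To repair your argument you would have to abandon the square-term reduction and run this momentum--velocity pairing on the cross term directly.
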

\begin{proof}
Let $r\in]0,1[$ and $\sigma>0$. From Lemma \ref{inclusion_omega_s} there exists $\eta_0>0$ such that $\forall \eta<\eta_0$,
\begin{equation*}
\oes\subset\Omega_{s,\sigma/3}(t), \text{ }\forall t\in[0,T].
\end{equation*}
Let $\eta<\eta_0$. Arguing as in \cite{SanMarStaTuc02} we split $[0,T]$ in $N_T$ subintervals $I_k=[(k-1)\tau,k\tau]$, $\tau=T/N_T$, $k=1,..,N_T$. We choose $N_T$ large enough (depending on $\sigma$) such that
\begin{equation}\label{l3_incl_1}
\Omega_{s,\sigma/2}(k\tau)\subset\Omega_{s,\sigma}(t) \text{ and }\Omega_{s,\sigma/3}(t)\subset\Omega_{s,\sigma/2}(k\tau),\quad \forall t\in I_k, \text{ }\forall k=1,..,N_T.
\end{equation}
This is possible since $\Omega_{s,\sigma}(t)$ is moving with a rigid velocity field, $L^2$ in time. Thus the flow generated by this velocity field is continuous in time, and $\Omega_{s,\sigma}(t)$ is the image of $\Omega_{s,\sigma}^0$ through this map.
On each subinterval $I_k$, $k=1,..,N_T$, we consider the momentum equation
\begin{equation*}
\rho_{\eta}\dfrac{\partial u_{\eta}}{\partial t} +\rho_{\eta}(u_{\eta} \cdot\nabla) u_{\eta}-2\mu \dv(D(u_{\eta}))+\nabla p_{\eta}+\dfrac{1}{\eta}\re H_{\eta}(u_{\eta}-u_{\eta ,s})-\rho_{\eta}g=0.
\end{equation*}
Let us consider a test function $\xi$ vanishing outside $I_k$ and such that $\xi(.,t)\in \mathcal{K}^1_{\sigma/2}(k\tau)$ for $t\in I_k$. Since $\xi(.,t)$ is rigid on $\Omega_{s,\sigma/2}(k\tau)\supset \Omega_{s,\sigma/3}(t)\supset\oes$, Lemma \ref{propR1} yields
\begin{equation*}
\displaystyle\int_{I_k}\int_{\Omega} \left[\rho_{\eta}u_{\eta}\cdot \xi_t +(\rho_{\eta}u_{\eta}\otimes u_{\eta}-2\mu D(u_{\eta})):D(\xi)+\rho_{\eta}g\cdot\xi\right] \, dx dt =0.
\end{equation*} 
From bounds given by (\ref{bornerhoH}), (\ref{u_bornee_1}) and (\ref{u_bornee_2}) we derive the following estimates:
\begin{eqnarray*}
\left|\displaystyle\int_{I_k}\int_{\Omega}  D(u_{\eta}):D(\xi) \, dx dt\right| & \leq & \Vert D(u_{\eta}) \Vert_{L^2(I_k,L^2(\Omega))} \Vert D(\xi) \Vert_{L^2(I_k,L^2(\Omega))}\notag\\
& {\leq} & C \Vert \xi \Vert_{L^2(I_k,H^1_0(\Omega))}\le C \Vert \xi \Vert_{L^2(I_k,\mathcal{K}^1_{\sigma/2}(k\tau))}\notag\\
& \leq & C \Vert \xi \Vert_{L^4(I_k,\mathcal{K}^1_{\sigma/2}(k\tau))},
\end{eqnarray*}
\begin{eqnarray*}
\left|\displaystyle\int_{I_k}\int_{\Omega} (\rho_{\eta}u_{\eta}\otimes u_{\eta}):D(\xi) \, dx dt\right| & \leq & \displaystyle\int_{I_k} \Vert \rho_{\eta} \Vert_{L^{\infty}(\Omega)}\Vert u_{\eta}\otimes u_{\eta} \Vert_{L^2(\Omega)} \Vert D(\xi) \Vert_{L^2(\Omega)} dt\notag\\
& \leq & \Vert \rho_{\eta} \Vert_{L^{\infty}(I_k,L^{\infty}(\Omega))}\displaystyle\int_{I_k} \Vert u_{\eta} \Vert^2_{L^4(\Omega)}\Vert \xi \Vert_{H^1_0(\Omega)} dt\notag\\
& {\leq} & C \displaystyle\int_{I_k} \Vert u_{\eta} \Vert^{\frac{1}{2}}_{L^2(\Omega)}\Vert \nabla u_{\eta} \Vert^{\frac{3}{2}}_{L^2(\Omega)}\Vert \xi \Vert_{H^1_0(\Omega)} dt\notag\\
& \leq & C \Vert u_{\eta} \Vert^\frac12_{L^{\infty}(I_k,L^2(\Omega))}\displaystyle\int_{I_k} \Vert \nabla u_{\eta} \Vert^{\frac{3}{2}}_{L^2(\Omega)}\Vert \xi \Vert_{H^1_0(\Omega)} dt\notag\\
& \le & C {\Vert \nabla u_{\eta} \Vert^{\frac{2}{3}}_{L^2(I_k,L^2(\Omega))}}\Vert \xi \Vert_{L^4(I_k,H^1_0(\Omega))} \notag\\
& \leq & C \Vert \xi \Vert_{L^4(I_k,H^1_0(\Omega))}
\le C \Vert \xi \Vert_{L^4(I_k,\mathcal{K}^1_{\sigma/2}(k\tau))},
\end{eqnarray*}
and 
\begin{equation*}
\left|\displaystyle\int_{I_k}\int_{\Omega} \rho_{\eta}g\cdot \xi \, dx dt\right| \leq \Vert \rho_{\eta} \Vert_{L^{\infty}(I_k,L^{\infty}(\Omega))}\Vert \xi \Vert_{L^4(I_k,\mathcal{K}^1_{\sigma/2}(k\tau))} \leq C \Vert \xi \Vert_{L^4(I_k,\mathcal{K}^1_{\sigma/2}(k\tau))}.
\end{equation*}
Collecting terms we get
\begin{equation*}
\left|\displaystyle\int_{I_k}\int_{\Omega} \rho_{\eta}u_{\eta}\cdot  \xi_t \, dx dt \right|\leq C \Vert \xi \Vert_{L^4(I_k,\mathcal{K}^1_{\sigma/2}(k\tau))}.
\end{equation*}
As $\xi(.,t)\in \mathcal{K}^1_{\sigma/2}(k\tau)$, $\xi_t(.,t)\in \mathcal{K}^1_{\sigma/2}(k\tau)\subset \mathcal{K}^0_{\sigma/2}(k\tau)$ and we have
\begin{eqnarray*}
|\langle \rho_{\eta}u_{\eta}, \xi_t \rangle| & = & |\langle \rho_{\eta}u_{\eta}, P^0_{\sigma/2}(k\tau)\xi_t \rangle|\\
& =& |\langle P^0_{\sigma/2}(k\tau)(\rho_{\eta}u_{\eta}), \xi_t \rangle|\\
& = & |\langle \frac{d}{dt}P^0_{\sigma/2}(k\tau)(\rho_{\eta}u_{\eta}), \xi \rangle|.
\end{eqnarray*}
Therefore
\begin{equation*}
\left|\displaystyle\int_{I_k}\int_{\Omega} \frac{d}{dt}P^0_{\sigma/2}(k\tau)(\rho_{\eta}u_{\eta})\cdot  \xi \, dx dt \right|\leq C \Vert \xi \Vert_{L^4(I_k,\mathcal{K}^1_{\sigma/2}(k\tau))},
\end{equation*}
which means that
\begin{equation}\label{pu_bornee_1}
\frac{d}{dt}P^0_{\sigma/2}(k\tau)(\rho_{\eta}u_{\eta}) \text{ bounded in } L^{\frac{4}{3}}(I_k,(\mathcal{K}^1_{\sigma/2}(k\tau))^*).
\end{equation}
Moreover $\rho_{\eta}u_{\eta}$ is bounded in $L^2(I_k,L^2(\Omega))$, 
\begin{equation}\label{pu_bornee_2}
P^0_{\sigma/2}(k\tau)(\rho_{\eta}u_{\eta}) \text{ is bounded in } L^2(I_k,\mathcal{K}^0_{\sigma/2}(k\tau)).
\end{equation}
Since $\mathcal{K}^0_{\sigma/2}(k\tau)\subset (\mathcal{K}^r_{\sigma/2}(k\tau))^*$ compactly for $r>0$, and $(\mathcal{K}^r_{\sigma/2}(k\tau))^*\subset (\mathcal{K}^1_{\sigma/2}(k\tau))^*$ continuously for $r<1$, by the Aubin-Simon Lemma (see e.g. \cite{Boyer2006}, p. 98) with (\ref{pu_bornee_1}) and (\ref{pu_bornee_2}), we obtain the relative compactness of the sequence $\left(P^0_{\sigma/2}(k\tau)(\rho_{\eta}u_{\eta})\right)$ in $L^2(I_k,(\mathcal{K}v^r_{\sigma/2}(k\tau))^*)$ for all $r\in]0,1[$.\\
From (\ref{u_cv_1}) we deduce
\begin{equation}\label{pu_cv_1}
\displaystyle\lim_{\eta\to 0}P^0_{\sigma/2}(k\tau)(\rho_{\eta}u_{\eta}) = P^0_{\sigma/2}(k\tau)\rho u \text{ in } L^2(I_k,(\mathcal{K}^r_{\sigma/2}(k\tau))^*) \text{ strong, }\forall r\in]0,1[.
\end{equation}
Since from (\ref{l3_incl_1}), we have
\begin{equation}\label{idem}
P^0_{\sigma/2}(k\tau)P^r_{\sigma}(t)=P^r_{\sigma}(t) \text{ }\forall t\in I_k \text{ }\forall r\in]0,1[,
\end{equation}
and we can write
\begin{eqnarray*}
\displaystyle\int_{I_k} \langle \rho_{\eta}u_{\eta}, P^r_{\sigma}(t)(u_{\eta}) \rangle_{L^2(\Omega)} dt & {=} & \displaystyle\int_{I_k} \langle \rho_{\eta}u_{\eta}, P^0_{\sigma/2}(k\tau)P^r_{\sigma}(t)u_{\eta} \rangle_{L^2(\Omega)} dt\\
& = & \displaystyle\int_{I_k} \langle P^0_{\sigma/2}(k\tau)(\rho_{\eta}u_{\eta}), P^r_{\sigma}(t)u_{\eta} \rangle_{L^2(\Omega)} dt\\
& = & \displaystyle\int_{I_k} \langle P^0_{\sigma/2}(k\tau)(\rho_{\eta}u_{\eta}), P^r_{\sigma}(t)u_{\eta} \rangle_{(\mathcal{K}^r_{\sigma/2})^*,\mathcal{K}^r_{\sigma/2}} dt.\\
\end{eqnarray*}
The sequence $(u_{\eta})$ is bounded in $L^2(0,T,\mathcal{V}^r)$ for all $r\in]0,1[$,  therefore $(P^r_{\sigma}(t)u_{\eta})$ is bounded in $L^2(0,T,\mathcal{K}^r_{\sigma/2})$ for all $r\in]0,1[$.\\
Therefore there exists a subsequence of $P^r_{\sigma}(t)u_{\eta}$ still denoted $P^r_{\sigma}(t)u_{\eta}$ such that
\begin{equation}\label{pu_cv_2}
P^r_{\sigma}(t)u_{\eta} \rightharpoonup P^r_{\sigma}(t)u \text{ in } L^2(0,T,\mathcal{K}^r_{\sigma/2}) \text{ weak.}
\end{equation}
Passing to the limit in $\eta$ yields
\begin{eqnarray*}
\displaystyle\lim_{\eta\to 0}\int_{I_k} \langle \rho_{\eta}u_{\eta}, P^r_{\sigma}(t)(u_{\eta}) \rangle_{L^2(\Omega)} dt & {=} & \displaystyle\int_{I_k} \langle P^0_{\sigma/2}(k\tau)\rho u, P^r_{\sigma}(t)u \rangle_{L^2(\Omega)} dt\\
&= & \displaystyle\int_{I_k} \langle \rho u, P^0_{\sigma/2}(k\tau)P^r_{\sigma}(t)u \rangle_{L^2(\Omega)} dt\\
& {=} & \displaystyle\int_{I_k} \langle \rho u, P^r_{\sigma}(t)u \rangle_{L^2(\Omega)} dt.
\end{eqnarray*}
Summing over $k=1,..,N_T$, we finally obtain
\begin{equation*}
\displaystyle\lim_{\eta\to 0} \Vert \rho_{\eta}u_{\eta}\cdot P^r_{\sigma}(u_{\eta})-\rho u\cdot P^r_{\sigma}(u) \Vert_{L^1(Q)}=0,
\end{equation*}
which implies
\begin{equation*}
\displaystyle\lim_{\sigma\to 0}\displaystyle\lim_{\eta\to 0} \Vert \rho_{\eta}u_{\eta}\cdot P^r_{\sigma}(u_{\eta})-\rho u\cdot P^r_{\sigma}(u) \Vert_{L^1(Q)}=0.
\end{equation*}
\end{proof}
\noindent
We can now conclude to the strong convergence of $\ue$.

\noindent Let $\varepsilon>0$. From Lemma \ref{convergence_Pu_u},
\begin{equation*}
\exists \sigma_0 >0,\text{ } \forall \sigma < \sigma_0,\text{ } \Vert P^r_{\sigma}(u)-u \Vert_{L^2(Q)} < \varepsilon.
\end{equation*}
From Lemma \ref{convergence_Pueta_ueta},
\begin{equation*}
\exists \sigma_0 >0,\text{ } \forall \sigma < \sigma_0,\text{ } \exists \eta_0 >0,\text{ } \forall \eta < \eta_0,\text{ } \Vert P^r_{\sigma}(u_{\eta})-u_{\eta} \Vert_{L^2(Q)} < \varepsilon,
\end{equation*}
and by Lemma \ref{convergence_uetaPueta_uPu},
\begin{equation*}
\exists \sigma_0 >0,\text{ } \forall \sigma < \sigma_0,\text{ } \exists \eta_0 >0,\text{ } \forall \eta < \eta_0,\text{ } \Vert \rho_{\eta}u_{\eta}P^r_{\sigma}(u_{\eta})-\rho uP^r_{\sigma}(u) \Vert_{L^1(Q)} < \varepsilon.
\end{equation*}
We therefore get from (\ref{decomp}) (up to the extraction of a subsequence)
\begin{equation*}
\exists \eta_0 >0,\text{ } \forall \eta < \eta_0,\text{ } \displaystyle\int_0^T\int_{\Omega} \vert u_{\eta}-u \vert^2 \, dx dt < C\varepsilon
\end{equation*}
which means that (still up to a subsequence)
\begin{equation}\label{u_cv_4}
u_{\eta} \to u \text{ in } L^2(Q) \text{ strong.}
\end{equation}
Classically, we also obtain from (\ref{rho_cv_1})
\begin{equation}\label{u_cv_5}
\rho_{\eta}u_{\eta} \rightharpoonup \rho u \text{ in } L^2(Q) \text{ weak}.
\end{equation}
\subsection{Passing to the limit}
Let us now prove that as $\eta$ goes to zero, a subsequence of $(\ue,\re)$ converges toward $(u,\rho)$ solution of the weak formulation (\ref{forme_faible}). Indeed~:
We have proved that
$
\rho_{\eta} \rightharpoonup \rho  \text{ in } L^{\infty}(0,T,L^{\infty}(\Omega)) \text{ weak}*.$ Therefore
$$\rho\in L^{\infty}(Q).$$
We have proved that $u_{\eta} \rightharpoonup u \text{ in } L^2(0,T,V)$ weak, that $\sqrt{\rho_{\eta}}u_{\eta}$ is bounded in $L^{\infty}(0,T,L^2(\Omega))$ and $\rho_{\eta}$ bounded from above and below in  $L^{\infty}(0,T,L^{\infty}(\Omega))$.
This implies that $u_{\eta} \text{ bounded in } L^{\infty}(0,T,L^2(\Omega))$, thus its weak limit belongs to  $L^{\infty}(0,T,L^2(\Omega))\cap L^2(0,T,V)$:
$$u\in L^{\infty}(0,T,L^2(\Omega))\cap L^2(0,T,V).$$
From Lemma \ref{u_limite_rigide}, we have
$
H u=H u_s=H(u_G+\omega\times r)
$
where $H$ is the characteristic  function of $\Omega_s(t)$. Thus
$$u(t)\in \mathcal{K}(t).$$
Using compactness results of DiPerna-Lions we already obtained that $\rho$ and $H$ are solutions of transport equations with $u$ and $u_s$ as velocities. For $H$ this means that for all $\psi\in\mathcal{C}^1(Q)$ with $\psi(T)=0$, 
\begin{eqnarray*}
\displaystyle\int_0^T\int_{\Omega}H\dfrac{\partial \psi}{\partial t} +H u_s\cdot\nabla\psi \, dx dt+ \displaystyle\int_{\Omega} H^0 \psi(0) \, dx=0.
\end{eqnarray*}
As from Lemma \ref{u_limite_rigide}, $Hu_s=Hu$, $H$ is also solution of 
\begin{eqnarray*}
\displaystyle\int_0^T\int_{\Omega}H\dfrac{\partial \psi}{\partial t} +H u\cdot\nabla\psi \, dx dt+ \displaystyle\int_{\Omega} H^0 \psi(0) \, dx=0.
\end{eqnarray*}
In other terms $H$, like $\rho$ satisfies a transport equation with velocity $u$.

\noindent Let us finally check that $u$ satisfies the momentum equation.

\noindent  Let $\sigma>0$. If  $\xi_\sigma\in H^1(Q)\cap L^2(0,T;\mathcal{K}_\sigma^1(t))$, from (\ref{ns1penal}) and (\ref{rhopenal}) we get
\begin{eqnarray*}
\displaystyle\int_{\Omega}\left[\dfrac{\partial (\rho_{\eta}u_{\eta})}{\partial t} +\dv(\rho_{\eta}u_{\eta} \otimes u_{\eta})-2\mu \dv(D(u_{\eta}))+\nabla p_{\eta}+\dfrac{1}{\eta}H_{\eta}\rho_{\eta}(u_{\eta}-u_{\eta ,s})-\rho_{\eta}g\right]\cdot\xi_\sigma \, dx=0.
\end{eqnarray*}
From Lemma \ref{inclusion_omega_s}, there exists $\eta_0$ such that $\eta<\eta_0$ implies:
\begin{eqnarray*}
\displaystyle\int_{\Omega}H_{\eta}\rho_{\eta}(u_{\eta}-u_{\eta ,s})\cdot\xi_\sigma \, dx=0.
\end{eqnarray*}
By integration by parts
\begin{eqnarray*}
\displaystyle\int_{\Omega}-2\mu \dv(D(u_{\eta}))\cdot\xi_\sigma \, dx=\displaystyle\int_{\Omega} 2\mu D(u_{\eta}):D(\xi_\sigma)\, dx,
\end{eqnarray*}
\begin{eqnarray*}
\displaystyle\int_{\Omega} \dv(\rho_{\eta}u_{\eta} \otimes u_{\eta})\cdot\xi_\sigma \, dx=\displaystyle\int_{\Omega} -(\rho_{\eta}u_{\eta} \otimes u_{\eta}):D(\xi_\sigma)\, dx,
\end{eqnarray*}
\begin{eqnarray*}
\displaystyle\int_{\Omega} \dfrac{\partial (\rho_{\eta}u_{\eta})}{\partial t}\cdot\xi_\sigma \, dx=\dfrac{d}{dt}\displaystyle\int_{\Omega} \rho_{\eta}u_{\eta}\cdot\xi_\sigma \, dx-\displaystyle\int_{\Omega} \rho_{\eta}u_{\eta}\cdot\dfrac{\partial \xi_\sigma}{\partial t} \, dx.
\end{eqnarray*}
As a result
\begin{eqnarray*}
\displaystyle\int_{\Omega} \left[\rho_{\eta}u_{\eta}\cdot\dfrac{\partial \xi_\sigma}{\partial t} +\left( \rho_{\eta}u_{\eta} \otimes u_{\eta}- 2\mu D(u_{\eta})\right):D(\xi_\sigma)+ \rho_{\eta}g\cdot\xi_\sigma \right] \, dx=\dfrac{d}{dt}\displaystyle\int_{\Omega} \rho_{\eta}u_{\eta}\cdot\xi_\sigma \, dx.
\end{eqnarray*}
We have already established that $u_{\eta} \rightharpoonup u \text{ in } L^2(0,T,H^1_0(\Omega)) \text{ weak}$, 
$u_{\eta} \to u \text{ in } L^2(0,T,L^2(\Omega)) \text{ strong}$, 
$\rho_{\eta}u_{\eta} \rightharpoonup \rho u \text{ in } L^2(0,T,L^2(\Omega)) \text{ weak}$, 
and $\rho_{\eta} \to \rho \text{ in } L^2(0,T,L^2(\Omega)) \text{ strong}$. Letting 
$\eta$ goes to zero, we thus obtain \begin{eqnarray*}
\displaystyle\int_{\Omega} \left[\rho u\cdot\dfrac{\partial \xi_\sigma}{\partial t} +\left( \rho u \otimes u- 2\mu D(u)\right):D(\xi_\sigma)+ \rho g\cdot\xi_\sigma \right] \, dx=\dfrac{d}{dt}\displaystyle\int_{\Omega} \rho u\cdot \xi_\sigma \, dx.
\end{eqnarray*}
which corresponds to the weak formulation (\ref{forme_faible}). This holds for any $\xi_\sigma\in H^1(Q)\cap L^2(0,T;K_\sigma^1(t))$, for arbitrary $\sigma>0$, and, since the time interval has been chosen to guarantee that there is no contact with the boundary, by Proposition 4.3 of \cite{SanMarStaTuc02}, for all $\xi\in H^1(Q)\cap L^2(0,T;K(t))$. This ends the proof of theorem \ref{cvresult}.
\section{Numerical simulations}
We give here a few numerical illustrations of the penalization method. We only sketch the numerical discretization and we refer to \cite{Bost2008} for a more detailed description and further numerical results.

We choose a time-step $\Delta t$ and denote by a superscript $n$ discretization of all quantities at time $t_n=n \Delta t$. For each time integration, we split the penalization model  (\ref{ns1penal})-(\ref{hpenal}) as follows 
\begin{itemize}
\item We solve the following variable density flow problem and obtain:
\begin{equation*}
\widetilde{u}=u^n-\Delta t(u^n.\nabla)u^n+ \dfrac{\Delta t}{\rho^n} \mu\Delta \widetilde{u}-\dfrac{\Delta t}{\rho^n}\nabla p^{n+1}+\Delta t g
\end{equation*}
with
\begin{equation*}
\rho^n=\rho_s\,H^n+\rho_f\,\left(1-H^n\right)
\end{equation*}
\item We compute the rigid velocity the rigid velocity corresponding to $\tilde u$: 
$$u_s=u_G+w \times r^n,\quad u_G=\dfrac{\displaystyle\int_\Omega \rho^n\, \widetilde{u}\, H^n \, dx}{\displaystyle\int_\Omega \rho^n\, H^n \, dx},\quad  \omega=J^{-1}\int_\Omega \rho^n (r^n\times \widetilde{u})\, H^n \, dx$$
\item We penalize the flow with this rigid velocity inside the solid body:
$$\dfrac{u^{n+1}-\widetilde{u}}{\Delta t}=\dfrac{1}{\eta}H^n(u_s-u^{n+1}) \Leftrightarrow u^{n+1}=\dfrac{\widetilde{u}+\frac{\Delta t}{\eta}H^n u_s}{1+\frac{\Delta t}{\eta}H^n}$$
\item We finally advect  the solid with the rigid velocity:
$$ H^{n+1}=H^n-\Delta t\, u_s\cdot\nabla H^n$$
\end{itemize}
Note that we have used an implicit time discretization of the velocity penalization, which allows to use a very small penalization parameter $\eta$. Using an explicit method would require this value no to be smaller than $\Delta t$. It can indeed be checked that a explicit method with $\eta=\Delta t$ essentially amounts to the projection method \cite{Pat01}. We will see below that using smaller values of $\eta$ together with an implicit scheme has a significant effect on the accuracy of the method. 

In order to numerically validate the penalization method, we consider the case  of the sedimentation of a rigid cylinder in two dimensions (see \cite{GloPan01}, \cite{Coquerelle2008}).
The domain $\Omega=[0,2]\times[0,6]$ is filled with an incompressible viscous fluid initially at rest, of 
density $\rho_f=1$ and viscosity $\mu_f=0.01$. The rigid cylinder of radius $R=0.125$ and density $\rho_s=1.5$ is initially centered in 
$(1,4)$, and we apply the gravity force $g=-980$.\\ 
In order to verify how the rigid constraint is satisfied in the solid, we monitor at time $t=0.1$ the $L^2$-norm of 
the discrete deformation tensor defined by:
\begin{equation*}
\Vert D(u)\Vert^2_{L^2(\Omega_s(t))}:=\displaystyle\sum_{ij,\phi_{ij}<0}\left(D_{11}^2(u_{ij})+2D_{12}^2(u_{ij})+D_{22}^2(u_{ij})\right)(\Delta x)^2
\end{equation*}
We fix $\Delta x=1/256$ and $\Delta t=10^{-4}$, and compute this norm for values of $\eta$ from $10^{-4}$ to $10^{-12}$, at $t=0.1$.\\
The results presented in table \ref{erreurD01-512} indicate  a convergence of the method with order 1 in $\eta$.\\
\begin{table}[!h]
\begin{center}
\begin{tabular}{|c|c|c|}
\hline
$\eta$ & $\Vert D(u)\Vert_{L^2(\Omega_s(t))}$ & $\alpha$ for $O(\eta^{\alpha})$  \rule[-11pt]{0pt}{24pt}\\ \hline
$10^{-4}$ & $4.30838$ & - \rule[-11pt]{0pt}{24pt}\\ 
$10^{-6}$ & $3.84749\times 10^{-2}$ & 1.0247 \rule[-11pt]{0pt}{24pt}\\ 
$10^{-8}$ & $3.45379\times 10^{-4}$ & 1.0234 \rule[-11pt]{0pt}{24pt}\\ 
$10^{-10}$ & $3.81643\times 10^{-6}$ & 0.9783 \rule[-11pt]{0pt}{24pt}\\ 
$10^{-12}$ & $3.79832\times 10^{-8}$ & 1.001 \rule[-11pt]{0pt}{24pt}\\
\hline
\end{tabular}
\end{center}
\caption{Sedimentation of a two dimensional cylinder. Errors on $\Vert D(u)\Vert_{L^2(\Omega_s(t))}$ and convergence orders at $t=0.1$ for $\Delta x=1/256$ and $\Delta t=10^{-4}$.}
\label{erreurD01-512} 
\end{table}
In figure \ref{profil-vitesse-eta} we show the profiles of the vertical velocity for several values of $\eta$, corresponding to a cross section at the center of the cylinder. 
We can observe that below $\eta=10^{-8}$ one may consider that we obtained converged velocity results. 
By taking $\eta=10^{-4}$, that is $\eta= \Delta t$, along with an explicit treatment of the penalization term, we get the projection method. As far as precision is concerned, one can note the benefit of using larger penalization parameters combined with an implicit time discretization of the penalization term.  


\begin{figure}[!h]
\centerline{\includegraphics[width=8cm]{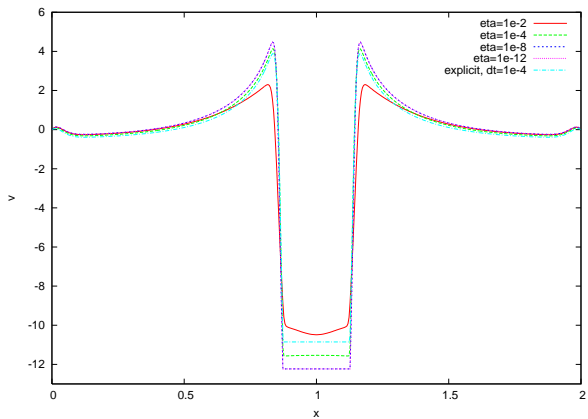}}
\caption{Sedimentation of a two-dimensional cylinder, for $\Delta x=1/256$ and $\Delta t=10^{-4}$. Vertical velocity in an horizontal cross-section through the center of the cylinder at $t=0.1$ for several values of the penalization parameter. }
\label{profil-vitesse-eta}
\end{figure}
\section{Conclusion}
We have presented and analyzed a penalization method that extends the method of \cite{AngBruFab99} to the case of a rigid body moving freely in an incompressible fluid. The proof is based on compactness arguments. Numerical illustrations have been provided to illustrate our convergence result. The benefit of using very large penalization parameters combined with an implicit time discretization of the penalization term, compared to the projection method \cite{ShaPat05} which corresponds to a particular explicit time discretization for the penalized equation,  has been demonstrated.

While this was not our primary goal, an outcome of our convergence study is an existence result for a weak formulation of the coupling between a rigid solid and a fluid. Let us shortly discuss how this result compares with existing ones \cite{GraMad98,DejEst99,ConSanTuc00, SanMarStaTuc02}. In \cite{GraMad98} local in time existence and uniqueness of strong solutions was proved. The Eulerian approach was developed in \cite{DejEst99} where global in time existence of weak solutions was proved in dimension $2$, without collisions. In the three-dimensional case, to our knowledge  only local in time existence of weak solutions was obtained, since $L^2$ regularity of the time derivative of velocity was required  (and therefore global existence would imply global existence of strong solutions). In \cite{ConSanTuc00,GunLeeSer00} the existence of global weak solution in three dimensions for one ball shaped solid, with possible collision with the boundary, was proved. In \cite{SanMarStaTuc02} the existence of global weak solutions for several rigid bodies with collisions was proved in dimension $2$. Our results prove the existence of global in time weak solutions in three dimensions, before collision. By contrast with \cite{ConSanTuc00,GunLeeSer00}, this result can easily be generalized to the case of several bodies by introducing indicator functions, rigid velocities and penalization terms corresponding to each body. To our knowledge, the existence of global in time weak solutions for several bodies with collisions is an open problem in three dimensions. 


\section{Appendix} 
This section si devoted to the proof of some technical lemmas that were used in section 3.

\paragraph{Proof of Lemma 3.3}
Let $\phi(t)\in L^2(\Omega\backslash\overline{\Omega_{s,\sigma}}(t))$, and $(v(t),q(t))\in H^1(\Omega\backslash\overline{\Omega_{s,\sigma}}(t))\times L^2(\Omega\backslash\overline{\Omega_{s,\sigma}}(t))$ solution of the Stokes problem
\begin{equation*}
\begin{cases}
-\Delta v(t) + \nabla q(t)=\phi(t) &\text{on }\Omega\backslash\overline{\Omega_{s,\sigma}}(t),\\
\dv v(t)=0 &\text{on }\Omega\backslash\overline{\Omega_{s,\sigma}}(t),\\
v(t)=0 &\text{on }\partial\left(\Omega\backslash\overline{\Omega_{s,\sigma}}(t)\right).
\end{cases}
\end{equation*}
Since we assumed a $\mathcal{C}^2$ regularity on $\Omega\backslash\Omega_{s}^0$ this regularity is conserved through rigid motion and, for $\sigma$ small enough (say $\sigma<\sigma_0$ for some $\sigma_0>0$) to $\Omega\backslash\Omega_{s,\sigma}(t)$. The regularity results of Agmon-Douglis-Nirenberg on the linear Stokes problem (see \cite{Temam1979}, prop. 2.3. p. 35) give
\begin{equation*}
(v(t),q(t))\in H^2(\Omega\backslash\overline{\Omega_{s,\sigma}}(t))\times H^1(\Omega\backslash\overline{\Omega_{s,\sigma}}(t))
\end{equation*}
and there exists $C>0$ such that
\begin{equation*}
\Vert v(t) \Vert_{H^2(\Omega\backslash\overline{\Omega_{s,\sigma}}(t))}+\Vert q(t) \Vert_{H^1(\Omega\backslash\overline{\Omega_{s,\sigma}}(t))} \leq C \Vert \phi(t) \Vert_{L^2(\Omega\backslash\overline{\Omega_{s,\sigma}}(t))}.
\end{equation*}
Note that, with our definition of $T$, the constant $C$, which depends on the geometry of the domain boundary, can be taken independent of $t\in [0,T]$ and $\sigma$, provided $\sigma_0$ is taken small enough. We can then write
\begin{eqnarray*}
\displaystyle\int_{\Omega\backslash\overline{\Omega_{s,\sigma}}(t)} w(t)\cdot \phi(t) \, dx & = &  -\displaystyle\int_{\Omega\backslash\overline{\Omega_{s,\sigma}}(t)} w(t) \cdot\Delta v(t) \, dx + \displaystyle\int_{\Omega\backslash\overline{\Omega_{s,\sigma}}(t)} w(t) \cdot\nabla q(t) \, dx\\
& = &  -\displaystyle\int_{\partial(\Omega\backslash\overline{\Omega_{s,\sigma}})(t)} w(t) \cdot\dfrac{\partial v(t)}{\partial n} ds+\displaystyle\int_{\Omega\backslash\overline{\Omega_{s,\sigma}}(t)} \nabla w(t) \cdot\nabla v(t) \, dx\\
& & +  \displaystyle\int_{\partial(\Omega\backslash\overline{\Omega_{s,\sigma}})(t)} w(t) q(t)\cdot n ds-\displaystyle\int_{\Omega\backslash\overline{\Omega_{s,\sigma}}(t)} \dv w(t) q(t) \, dx\\
& = &  -\displaystyle\int_{\partial\Omega_{s,\sigma}(t)} w(t) \cdot\dfrac{\partial v(t)}{\partial n} ds+\displaystyle\int_{\partial(\Omega\backslash\overline{\Omega_{s,\sigma}})(t)} \dfrac{\partial w(t)}{\partial n}\cdot v(t)  ds\\
&  & -\displaystyle\int_{\Omega\backslash\overline{\Omega_{s,\sigma}}(t)} \Delta w(t)\cdot v(t) \, dx + \displaystyle\int_{\partial\Omega_{s,\sigma}(t)} w(t) q(t)\cdot n ds\\
& = &  -\displaystyle\int_{\partial\Omega_{s,\sigma}(t)} w(t) \cdot\dfrac{\partial v(t)}{\partial n} ds+\displaystyle\int_{\Omega\backslash\overline{\Omega_{s,\sigma}}(t)} v(t) \cdot\nabla p(t) \, dx \\
&  & + \displaystyle\int_{\partial\Omega_{s,\sigma}(t)} w(t) q(t)\cdot n ds
\end{eqnarray*}
The integral of $v\cdot\nabla p$ vanishes since $v$ is divergence-free and vanishes on $\partial\left(\Omega\backslash\overline{\Omega_{s,\sigma}}(t)\right)$. Then using classical trace theorems in Sobolev spaces we get
\begin{eqnarray*}
\displaystyle\int_{\Omega\backslash\overline{\Omega_{s,\sigma}}(t)} w(t) \cdot\phi(t) \, dx& = &  -\displaystyle\int_{\partial\Omega_{s,\sigma}(t)} g(t) \cdot\dfrac{\partial v(t)}{\partial n} ds + \displaystyle\int_{\partial\Omega_{s,\sigma}(t)} g(t) q(t)\cdot n ds\\
& \leq & \Vert g(t) \Vert_{L^2(\partial\Omega_{s,\sigma}(t))} \left( \Vert \nabla v(t) \Vert_{L^2(\partial\Omega_{s,\sigma}(t))}+\Vert q(t) \Vert_{L^2(\partial\Omega_{s,\sigma}(t))}\right)\\
& \leq &C \Vert g(t) \Vert_{L^2(\partial\Omega_{s,\sigma}(t))} \left( \Vert v(t) \Vert_{H^2(\Omega\backslash\overline{\Omega_{s,\sigma}}(t))}+\Vert q(t) \Vert_{H^1(\Omega\backslash\overline{\Omega_{s,\sigma}}(t))}\right)\\
& \leq & C \Vert g(t) \Vert_{L^2(\partial\Omega_{s,\sigma}(t))} \Vert \phi(t) \Vert_{L^2(\Omega\backslash\overline{\Omega_{s,\sigma}}(t))}\\
& \leq & C {\Vert u(t) \Vert}^{\frac{1}{2}}_{L^2(\Omega_{s,\sigma}(t))} {\Vert \nabla u(t) \Vert}^{\frac{1}{2}}_{L^2(\Omega_{s,\sigma}(t))} \Vert \phi(t) \Vert_{L^2(\Omega\backslash\overline{\Omega_{s,\sigma}}(t))}.\\
\end{eqnarray*}
This proves the assertion.

\paragraph{Proof of Lemma 3.6}\mbox{}\\
\textbf{Step 1:} We first show how to construct for a.e. $t\in]0,T[$ a function $v_{\sigma}(.,t)\in \mathcal{K}^r_{\sigma}(t)$ such that
\begin{equation*}
\displaystyle\lim_{\sigma\to 0} \Vert v_{\sigma}(.,t)-u(.,t) \Vert_{\mathcal{V}^r}=0 \text{ a.e. on }]0,T[.
\end{equation*}
Let $\sigma>0$ and $v_{\sigma}(.,t)$ such that
\begin{equation*}
\begin{cases}
-\Delta v_{\sigma}(.,t)+\nabla p(.,t)=-\Delta u(.,t) &\text{on }\Omega\backslash\overline{\Omega_{s,\sigma}}(t),\\
\div  v_{\sigma}(.,t)=0&\text{on }\Omega\backslash\overline{\Omega_{s,\sigma}}(t),\\
v_{\sigma}(.,t)=u_{s}(.,t)&\text{on }\partial\Omega_{s,\sigma}(t),\\
v_{\sigma}(.,t)=0&\text{on }\partial\Omega,
\end{cases}
\end{equation*}
where 
\begin{equation*}
u_s=\dfrac{1}{M}\displaystyle\int_\Omega \rho u\, H \, dx
+\left(J^{-1}\,\displaystyle\int_\Omega \rho (r\times u) H \, dx\right)\times r.
\end{equation*}
By lemma \ref{u_limite_rigide}, $u(.,t)=u_s(.,t)$ on $\Omega_{s}(t)$. Extending 
$v_{\sigma}(.,t)$ by $u_{s}(.,t)$ in $\Omega_{s,\sigma}(t)$, we have $v_{\sigma}(.,t)\in \mathcal{K}^r_{\sigma}(t)$.
We set $e_{\sigma}(.,t)=v_{\sigma}(.,t)-u(.,t)$. It satisfies
\begin{equation*}
\begin{cases}
-\Delta e_{\sigma}(.,t)+\nabla p(.,t)=0 &\text{on }\Omega\backslash\overline{\Omega_{s,\sigma}}(t),\\
\div  e_{\sigma}(.,t)=0&\text{on }\Omega\backslash\overline{\Omega_{s,\sigma}}(t),\\
e_{\sigma}(.,t)=u_{s}(.,t)-u(.,t) &\text{on }\partial\Omega_{s,\sigma}(t),\\
e_{\sigma}(.,t)=0 &\text{on }\partial\Omega.
\end{cases}
\end{equation*}
We extend $e_{\sigma}(.,t)$ by $u_{s}(.,t)-u(.,t)$ in $\Omega_{s,\sigma}(t)$, so that $e_{\sigma}(.,t)=0$ in $\Omega_s(t)$.\\
We claim that
\begin{equation}\label{limite_l2_u}
\displaystyle\lim_{\sigma\to 0} \Vert e_{\sigma}(.,t) \Vert_{L^2(\Omega)}=0  \text{ a.e. on }]0,T[.
\end{equation}
In $\Omega_{s}(t)$ $e_{\sigma}(.,t)=0$, thus
\begin{equation*}
{\Vert e_{\sigma}(.,t) \Vert}^2_{L^2(\Omega)}={\Vert e_{\sigma}(.,t) \Vert}^2_{L^2(\Omega_{s,\sigma}(t)\backslash\overline{\Omega_{s}}(t))}+{\Vert e_{\sigma}(.,t) \Vert}^2_{L^2(\Omega\backslash\overline{\Omega_{s,\sigma}}(t))}.
\end{equation*}
Since $\Omega_{s,\sigma}(t)\backslash\overline{\Omega_{s}}(t)$ has width $2\sigma$, from the proof of lemma 5.10 of \cite{Fujita70} we have a.e. on $]0,T[$,
\begin{eqnarray*}
{\Vert e_{\sigma}(.,t) \Vert}^2_{L^2(\Omega_{s,\sigma}(t)\backslash\overline{\Omega_{s}}(t))} &\leq & C \left({\Vert e_{\sigma}(.,t) \Vert}^2_{L^2(\partial\Omega_{s}(t))}+\sigma^2 {\Vert \nabla e_{\sigma}(.,t) \Vert}^2_{L^2(\Omega_{s,\sigma}(t)\backslash\overline{\Omega_{s}}(t))}\right)\\
& \leq & C \left({\Vert e_{\sigma}(.,t) \Vert}_{L^2(\Omega_{s}(t))}{\Vert \nabla e_{\sigma}(.,t) \Vert}_{L^2(\Omega_{s}(t))}+\sigma^2 {\Vert \nabla e_{\sigma}(.,t) \Vert}^2_{L^2(\Omega_{s,\sigma}(t)\backslash\overline{\Omega_{s}}(t))}\right)\\
&=&C\sigma^2 {\Vert \nabla e_{\sigma}(.,t) \Vert}^2_{L^2(\Omega_{s,\sigma}(t)\backslash\overline{\Omega_{s}}(t))}
\end{eqnarray*}
Next, as  
$e_{\sigma}(.,t)=u_{s}(.,t)-u(.,t)$ in $\Omega_{s,\sigma}(t)$
and $u(.,t)$ and $u_{s}(.,t)$ are in $H^1_0(\Omega)$, we get
${\Vert \nabla e_{\sigma}(.,t) \Vert}_{L^2(\Omega_{s,\sigma}(t)\backslash\overline{\Omega_{s}}(t))}
\le{\Vert \nabla e_{\sigma}(.,t) \Vert}_{L^2(\Omega_{s,\sigma}(t))}\le{\Vert \nabla e_{\sigma}(.,t) \Vert}_{L^2(\Omega)} \leq C$, where $C$ is independent of $\sigma$. This
gives \begin{equation*}
{\Vert e_{\sigma}(.,t) \Vert}_{L^2(\Omega_{s,\sigma}(t))} = {\Vert e_{\sigma}(.,t) \Vert}_{L^2(\Omega_{s,\sigma}(t)\backslash\overline{\Omega_{s}}(t))}
\leq C \sigma.
\end{equation*}
By Lemma\;\ref{stokes} we thus get $$
{\Vert e_{\sigma}(.,t) \Vert}^2_{L^2(\Omega\backslash\overline{\Omega_{s,\sigma}}(t))}{\leq} C {\Vert e_{\sigma}(.,t) \Vert}_{L^2(\Omega_{s,\sigma}(t))} {\Vert \nabla e_{\sigma}(.,t) \Vert}_{L^2(\Omega_{s,\sigma}(t))} \le C\sigma.
$$
Collecting the above estimates, we conclude that 
\begin{equation*}
\displaystyle\lim_{\sigma\to 0} {\Vert e_{\sigma}(.,t) \Vert}^2_{L^2(\Omega)} =0.
\end{equation*}
In order to prove that this convergence also holds in $\mathcal{V}^r$ we first note that
\begin{equation}\label{borne_h1_u}
\Vert e_{\sigma}(.,t) \Vert_{H^1(\Omega)}\leq C \text{ a.e. on }]0,T[\\
\end{equation}
as is readily seen from estimates on the Stokes problem verified by $e_\sigma$. By interpolation (see e.g. \cite{adams}, p. 135), we obtain
\begin{equation}
\Vert e_{\sigma}(.,t) \Vert_{\mathcal{V}^r} \leq {\Vert e_{\sigma}(.,t) \Vert}^{1-r}_{L^2(\Omega)} {\Vert e_{\sigma}(.,t) \Vert}^{r}_{H^1(\Omega)}\label{interp}
\end{equation}
and due to (\ref{limite_l2_u}) and (\ref{borne_h1_u}),
\begin{equation}\label{limite_hs_u}
\displaystyle\lim_{\sigma\to 0} \Vert e_{\sigma}(.,t) \Vert_{\mathcal{V}^r}=0 \text{   } \forall r\in[0,1[ \text{ a.e. on }]0,T[.
\end{equation}
\textbf{Step 2:} By definition of $P^r_{\sigma}$, 
\begin{equation*}
\Vert P^r_{\sigma}u(.,t)-u(.,t) \Vert_{\mathcal{V}^r}\leq\Vert v_{\sigma}(.,t)-u(.,t) \Vert_{\mathcal{V}^r}
\end{equation*}
thus the pointwise convergence on $v_\sigma$ we just obtained implies
\begin{equation}
\displaystyle\lim_{\sigma\to 0} \Vert P^r_{\sigma}u(.,t)-u(.,t) \Vert_{\mathcal{V}^r}=0 \text{ a.e. on }]0,T[.
\end{equation}
\textbf{Step 3:}  $f_{\sigma}:t\mapsto\Vert P^r_{\sigma}u(.,t)-u(.,t) \Vert_{\mathcal{V}^r}$ is measurable on $[0,T]$ and since $0\in  \mathcal{K}^r_\sigma$,
\begin{eqnarray*}
{\Vert f_{\sigma} \Vert}^{\frac{2}{r}}_{L^{\frac{2}{r}}(0,T)} = \displaystyle\int_0^T  {\Vert P^r_{\sigma}u(.,t)-u(.,t) \Vert}^{\frac{2}{r}}_{\mathcal{V}^r} dt & {\leq} & \displaystyle\int_0^T  {\Vert u(.,t) \Vert}^{\frac{2}{r}}_{\mathcal{V}^r} dt\\
&{\leq} & C\displaystyle\int_0^T  {\Vert u(.,t) \Vert}^{\frac{2(1-r)}{r}}_{L^2(\Omega)}{\Vert u(.,t) \Vert}^{2}_{H^1(\Omega)} dt\\
& \leq & C {\Vert u \Vert}^{\frac{2(1-r)}{r}}_{L^{\infty}(0,T,L^2(\Omega))}{\Vert u\Vert}^{2}_{L^2(0,T,H^1_0(\Omega))} \\
&{\leq} & C.
\end{eqnarray*}
To summarize $f_\sigma$ verifies
\begin{equation*}
\begin{cases}
\displaystyle\lim_{\sigma\to 0} f_{\sigma}(t)=0 \text{ a.e. on }[0,T],\\
f_{\sigma} \text{ is measurable on } [0,T],\\
{\Vert f_{\sigma} \Vert}_{L^{\frac{2}{r}}(0,T)}\leq C\text{ with }r<1.
\end{cases}
\end{equation*}
Therefore, thanks to lemma \ref{egorov}, $\displaystyle\lim_{\sigma\to 0} {\Vert f_{\sigma} \Vert}_{L^{2}(0,T)}=0$, which means
\begin{equation*}
\displaystyle\lim_{\sigma\to 0} \Vert P^r_{\sigma}u-u \Vert_{L^2(0,T,\mathcal{V}^r)}=0.
\end{equation*}
\paragraph{Proof of Lemma 3.7}\mbox{}\\
\textbf{Step 1:} We construct for a.e. fixed $t\in[0,T]$ a function $v_{\eta\sigma}(.,t)\in \mathcal{K}^r_{\sigma}(t)$ such that
\begin{equation*}
\displaystyle\lim_{\sigma\to 0} \displaystyle\lim_{\eta\to 0} \Vert v_{\eta\sigma}(.,t)-u_{\eta}(.,t) \Vert_{\mathcal{V}^r}=0 \text{ a.e.  on }]0,T[.
\end{equation*}
Let $\sigma>0$ and  $v_{\eta\sigma}(.,t)$ solution of the following Stokes problem outside $\Omega_{s,\sigma}^\eta(t)$:
\begin{equation*}
\begin{cases}
-\Delta v_{\eta\sigma}(.,t)+\nabla p(.,t)=-\Delta u_{\eta}(.,t) &\text{on }\Omega\backslash\overline{\Omega^\eta_{s,\sigma}}(t),\\
\div  v_{\eta\sigma}(.,t)=0&\text{on }\Omega\backslash\overline{\Omega^\eta_{s,\sigma}}(t),\\
v_{\eta\sigma}(.,t)=u_{\eta,s}(.,t) &\text{on }\partial\Omega^\eta_{s,\sigma}(t),\\
v_{\eta\sigma}(.,t)=0 &\text{on }\partial\Omega.
\end{cases}
\end{equation*}
Extending $v_{\eta\sigma}(.,t)$ by $u_{\eta,s}(.,t)$ in $\Omega^\eta_{s,\sigma}(t)$, we have $v_{\eta\sigma}(.,t)\in \mathcal{K}^r_{\sigma}(t)$. We then  introduce $e_{\eta\sigma}(.,t)=v_{\eta\sigma}(.,t)-u_{\eta}(.,t)$. It verifies
\begin{equation*}
\begin{cases}
-\Delta e_{\eta\sigma}(.,t)+\nabla p(.,t)=0 &\text{on }\Omega\backslash\overline{\Omega^\eta_{s,\sigma}}(t),\\
\div  e_{\eta\sigma}(.,t)=0&\text{on }\Omega\backslash\overline{\Omega^\eta_{s,\sigma}}(t),\\
e_{\eta\sigma}(.,t)=u_{\eta,s}(.,t)-u_{\eta}(.,t) &\text{on }\partial\Omega^\eta_{s,\sigma}(t),\\
e_{\eta\sigma}(.,t)=0 &\text{on }\partial\Omega,
\end{cases}
\end{equation*}
and we extend it by $u_{\eta,s}(.,t)-u_{\eta}(.,t)$ in $\Omega_{s,\sigma}^\eta(t)$.\\\\
We claim that
\begin{equation}\label{limite_l2_ueta}
\displaystyle\lim_{\sigma\to 0} \displaystyle\lim_{\eta\to 0} \Vert e_{\eta\sigma}(.,t) \Vert_{L^2(\Omega)}=0  \text{ a.e. on }]0,T[.
\end{equation}
From lemma \ref{inclusion_omega_s}, for a given $\sigma>0$, there exists $\eta_0>0$ such that $\forall \eta<\eta_0$,
\begin{equation*}
\oes \subset \Omega_{s,\sigma}(t)\text{ and }\os\subset\Omega_{s,\sigma}^\eta(t).
\end{equation*}
Let $\eta<\eta_0$. We write
\begin{equation}
{\Vert e_{\eta\sigma}(.,t) \Vert}^2_{L^2(\Omega)}={\Vert e_{\eta\sigma}(.,t) \Vert}^2_{L^2(\oes)}+{\Vert e_{\eta\sigma}(.,t) \Vert}^2_{L^2(\Omega_{s,\sigma}(t)\backslash\oesb)}+{\Vert e_{\eta\sigma}(.,t) \Vert}^2_{L^2(\Omega\backslash\overline{\Omega_{s,\sigma}}(t))}.\label{dec}
\end{equation}
From estimate (\ref{hu_bornee_1}), there holds
\begin{equation}
\int_0^T{\Vert e_{\eta\sigma}(.,t) \Vert}^2_{L^2(\oes)}dt \leq C \eta.\label{esti}
\end{equation}
Since $\Omega_{s,\sigma}(t)\backslash\oesb$ has width less than $2\sigma$, from the proof of lemma 5.10 of \cite{Fujita70} we have a.e. on $]0,T[$,
\begin{eqnarray}
{\Vert e_{\eta\sigma}(.,t) \Vert}^2_{L^2(\Omega_{s,\sigma}(t)\backslash\oesb)} & \leq & C \left({\Vert e_{\eta\sigma}(.,t) \Vert}^2_{L^2(\partial\oes)}+\sigma^2 {\Vert \nabla e_{\eta\sigma}(.,t) \Vert}^2_{L^2(\Omega_{s,\sigma}(t)\backslash\oesb)}\right).\nonumber\\
\end{eqnarray}
And using a trace theorem, we get
\begin{eqnarray}
{\Vert e_{\eta\sigma}(.,t) \Vert}^2_{L^2(\Omega_{s,\sigma}(t)\backslash\oesb)} & \leq & C \left({\Vert e_{\eta\sigma}(.,t) \Vert}_{L^2(\oes)}{\Vert \nabla e_{\eta\sigma}(.,t) \Vert}_{L^2(\oes)}\right.\nonumber\\
&&\qquad\qquad\qquad\qquad\qquad\qquad\quad\left.+\sigma^2 {\Vert \nabla e_{\eta\sigma}(.,t) \Vert}^2_{L^2(\Omega_{s,\sigma}(t)\backslash\oesb)}\right)\nonumber\\
& \le & C \left({\Vert e_{\eta\sigma}(.,t) \Vert}_{L^2(\oes)}{\Vert \nabla e_{\eta\sigma}(.,t) \Vert}_{L^2(\Omega_{s,\sigma}(t))}\right.\nonumber\\
&&\qquad\qquad\qquad\qquad\qquad\qquad\quad\left.+\sigma^2 {\Vert \nabla e_{\eta\sigma}(.,t) \Vert}^2_{L^2(\Omega_{s,\sigma}(t))}\right).\label{bof}
\end{eqnarray}
Adding ${\Vert e_{\eta\sigma}(.,t) \Vert}^2_{L^2(\oes)}$ to this inequality gives
\begin{multline}
{\Vert e_{\eta\sigma}(.,t) \Vert}^2_{L^2(\Omega_{s,\sigma}(t))} \leq C \left({\Vert e_{\eta\sigma}(.,t) \Vert}^2_{L^2(\oes)}+{\Vert e_{\eta\sigma}(.,t) \Vert}_{L^2(\oes)}{\Vert \nabla e_{\eta\sigma}(.,t) \Vert}_{L^2(\Omega_{s,\sigma}(t))}\right.\\\left.+\sigma^2 {\Vert \nabla e_{\eta\sigma}(.,t) \Vert}^2_{L^2(\Omega_{s,\sigma}(t))}\right).\label{baf}
\end{multline}
For the last term in (\ref{dec}) we use Lemma \ref{stokes}:
\begin{equation}
{\Vert e_{\eta\sigma}(.,t) \Vert}^2_{L^2(\Omega\backslash\overline{\Omega_{s,\sigma}}(t))} \le C {\Vert e_{\eta\sigma}(.,t) \Vert}_{L^2(\Omega_{s,\sigma}(t))} {\Vert \nabla e_{\eta\sigma}(.,t) \Vert}_{L^2(\Omega_{s,\sigma}(t))}.\label{bif}
\end{equation}
Since $e_{\eta\sigma}(.,t)=u_{\eta,s}(.,t)-u_{\eta}(.,t) \text{ in }\Omega_{s,\sigma}(t)$
and $u_{\eta}$, $u_{\eta,s}$ are bounded in $L^2(0,T;H^1_0(\Omega))$, we have
\begin{equation}
\int_0^T{\Vert \nabla e_{\eta\sigma}(.,t) \Vert}^2_{L^2(\Omega_{s,\sigma}(t))} dt\leq C.\label{buf}
\end{equation}
With (\ref{esti}) and (\ref{buf}) we are now in position to estimate the integral over $[0,T]$ of (\ref{bof}-\ref{bif}). By Cauchy-Schwarz inequality:
$$\int_0^T{\Vert e_{\eta\sigma}(.,t) \Vert}^2_{L^2(\Omega_{s,\sigma}(t)\backslash\oesb)}dt\le C(\eta^\frac12+\sigma^2),$$
$$\int_0^T{\Vert e_{\eta\sigma}(.,t) \Vert}^2_{L^2(\Omega\backslash\overline{\Omega_{s,\sigma}}(t))}dt \le C\left(\int_0^T {\Vert e_{\eta\sigma}(.,t) \Vert}^2_{L^2(\Omega_{s,\sigma}(t))} dt\right)^\frac12\le C(\eta+\eta^\frac12+\sigma^2)^\frac12.$$
Therefore, for a fixed value of $\sigma$ we can  pass to the limit in $\eta$, and then pass to the limit in $\sigma$, to obtain
\begin{equation}
\displaystyle\lim_{\sigma\to 0}\lim_{\eta\to 0}\int_0^T{\Vert e_{\eta\sigma}(.,t) \Vert}^2_{L^2(\Omega)}dt =0.
\end{equation}
This strong convergence can be turned into an almost everywhere in $t$ convergence up to the extraction of a subsequence. The rest of the proof is adapted in a straightforward way from that of Lemma \ref{convergence_Pu_u}.

\section*{Acknowledgments} This work was supported by the French Ministry of Education through ANR grant 06-BLAN-0306.


{\normalsize}

\end{document}